\newcommand{\p}{\partial}
\newcommand{\beq}{\begin{equation}}
\newcommand{\eeq}{\end{equation}}
\renewcommand{\epsilon}{\varepsilon}
\renewcommand{\leq}{\leqslant}
\renewcommand{\geq}{\geqslant}
\renewcommand{\d}{\mathrm{d} }
\newcommand{\R}{\mathbb{R}}
\newcommand{\N}{\mathbb{N}}
\newcommand{\Z}{\mathbb{Z}}
\newcommand{\Leb}{\mathrm{L}}
\newcommand{\ppt}{\frac{\p}{\p t}}
\newcommand{\supp}{\mathrm{supp \,}}
\newcommand{\M}{\mathcal{M}}
\newcommand{\e}{\mathrm e}
\newcommand{\ddt}{\frac{\mathrm{d}}{\mathrm{d}t}}
\newcommand{\1}{\mathds 1}
\newcommand{\Cr}{\mathcal{C}}
\title[Measure solutions for the selection equation]{Measure framework for the pure selection equation: global well posedness and numerical investigations}
\author{\firstname{Hugo} \lastname{Martin}}
\address{IRMAR\\
Universit\'e de Rennes\\
Campus de Beaulieu, bâtiments 22 et 23
263 avenue du Général Leclerc, CS 74205
35042 RENNES Cedex
France}
\email{hugo.martin@univ-rennes.fr}
\keywords{population dynamics, selection model, measure solutions, semigroup, asymptotic behaviour, simulations}
\subjclass{45J05,45M15,65R20,92D15,92D40}
\begin{document}

\begin{abstract}
We study the classic pure selection integrodifferential equation, stemming from adaptative dynamics, in a measure framework by mean of duality approach. After providing a well posedness result under fairly general assumptions, we focus on the asymptotic behaviour of various cases, illustrated by some numerical simulations.
\end{abstract}

\maketitle

\section{Introduction}

\subsection{A brief state of the art}

The synthetic theory of evolution is nowadays the golden standard to explain the variety of living species, as well as their disappearing. This theory predicts fluctuations in populations due to the occurence of mutations, that are then eventually selected by the environment. For decades now, mathematical models have aimed at studying these phenomena, both from an individual point of view and at the population level.
\
This first approach leads to individual based models, that use stochastic processes~\cite{Dieckmann1996, Champagnat2002, Champagnat2006, Champagnat2008, Champagnat2010a, Champagnat2014, Kraut2019, Coquille2020}. It also provides a derivation of the deterministic counterpart of the studied model. When a large enough population is considered, the behaviours of individuals are averaged, resulting in deterministic equations, such as ordinary differential, integro-differential or partial differential equations. The famous selection-mutation model and its variants have been extensively studied for decades now~\cite{Aafif1998,Burger2000,Ferriere2002, Calsina2006, PerthameTransport,Cleveland2009, Canizo2012, Jabin2012, Ackleh2014, Ackleh2016,  Jabin2016, Bonnefon2017, Jabin2017, Costa2021, Ackleh2021}.
In this type of models, populations are structured by trait, and the competitive interactions often lead to the selection of the best trait. Various aditionnal features can be added, be it for example clonal selection and two populations interacting~\cite{Busse2016,Busse2020}, intra species cooperation or competition~\cite{Cooney2021}, horizontal gene transfer~\cite{Calvez2020a} or the addition of a space variable to account for the behaviour of tumor cells, see~\cite{Ardaseva2019, Lorenzi2019, Villa2020} and reference therein.
\\
This article focuses on the pure selection equation, that have already been studied by many authors, see~\cite{Ackleh1999,Ackleh2005,PerthameTransport,Desvillettes2008,Jabin2010} among others. One of its goals is to provide a measure framework to study this equation with a very general \textit{selection pressure operator}, denoted $\Sigma$. The resulting equation is then
\beq \label{eq:selection}
\left\{
\begin{array}{l}
\ppt n(t,x) = \Sigma[n(t,\cdot)](x)n(t,x) \quad t>0, x\in X\medskip\\
n(0,x) = n_0(x), \quad x\in X
\end{array}
\right.
\eeq
in which traits $x$ lie in a subset of $\R^d$ denoted $X$, with $d$ a positive integer. In past years, efforts have been made to state the considered models in spaces of measures~\cite{Ackleh1999,Ackleh2005,Cressman2005,Canizo2012}. The interest of such a general formulation, both of the type of solution and of the selection pressure operator, lies in the broad class of models included. Indeed, in the case of $X$ being a finite subset of $\R$, say $X = \{x_1,\dots,x_K\}$, Equation~\eqref{eq:selection} reduces to a system of ODE, such as competitive models in the sense of Hirsch~\cite{Hirsch1982, Hirsch1985, Hirsch1988}. For example, denoting $n(t,x_i)$ the amount of the $i$th population at time $t$, with $r_i$ its intrinsic growth rate and $\alpha_{i,j}$ the competition coefficients with the $j$th species, we recover the general system mentionned in~\cite{May1975} by choosing
\[
\Sigma[n(t,\cdot)](x_i) := r_i\left(1 - \sum_{j=1}^K \alpha_{i,j}n(t,x_j)\right),
\]
and even more general models are possible, see~\cite{Champagnat2010}.

\subsection{Framework for measure solutions by duality}

As previously mentionned, measure solutions to structured population equation have attracted much attention during the past years. Recent articles summon the theory of semigroups to express these kind of solutions, both in case of linear or nonlinear equation. We refer to~\cite{Duell2021} for a complete exposition of the relevant measure theory.
\\
We denote the set of bounded Borel functions defined on $X$ by $\mathcal{B}(X)$. For any $f$ lying in this set, the supremum norm is defined by \[\|f\|_\infty = \sup_{x\in X} |f(x)|.\]
Endowed with the supremum norm, the space $\Cr(X)$ of continuous functions on $X$ is a Banach space. We can identify its topological dual space with $\M(X)$ the space of signed measures on $X$, thanks to the Riesz representation theorem, through the mapping
\begin{equation*}
\left\{
\begin{array}{lcr}
\M(X) &\rightarrow& (\Cr(X))' \medskip\\
\mu &\mapsto& (f\mapsto \langle\mu, f\rangle)
\end{array}
\right.
\end{equation*}
which is an isometric isomorphism, thus
\[\|\mu\|_{TV} = \sup_{\|f\|_\infty \leq 1} \langle\mu, f\rangle\]
defines a norm on $\M(X)$, so it is a Banach space. The standard Hahn-Jordan decomposition of a signed measure $\mu$ is
$\mu = \mu_+ - \mu_-$ with $\mu_+,\mu_-\in\M_+(X)$ the set of finite non negative measures, and these measures are mutually singular. This decomposition enables to define
\[
|\mu|:=\mu_+ + \mu_-
\]
which is a non negative measure. In turn, we can define the total variation norm of a measure $\mu$ by
\[\|\mu\|_{TV} := |\mu|(X) = \mu_+(X) + \mu_-(X).\]
It is worth stressing out that for a non negative measure $\mu$, one has
\[
\|\mu\|_{TV} = \mu(X) = \langle \mu,\1_X\rangle.
\]
Finally, we explain how to extend the classical sense of Equation~\eqref{eq:selection} to measures. Assume that $n(t,x)\in \Cr([0,\infty);\Leb^1(X))$ is differentiable in time and satisfies~\eqref{eq:selection} in the classical sense. Then, multiplying~\eqref{eq:selection} by $f\in\mathcal{B}(X)$ and integrating it in space and then time, we obtain
\[
\int_X f(x)n(t,x)\d x = \int_X f(x)n_0(x)\d x + \int_0^t \int_X \Sigma[n(s,\cdot)](x)f(x)n(s,x)\d x\,\d s.
\]
This equation leads us to the following definition of measure solution for Equation~\eqref{eq:selection}.
\begin{defi}\label{def:measure sol}
Let $T>0$. A family $(\mu_t)_{0\leq t \leq T}\in \Cr([0,T];\M(X))$ with initial measure $\mu_0$ is called a measure solution to the pure selection equation with initial data $\mu_0$ if for all $f\in \Cr_b(X)$ the mapping $t \mapsto \langle\mu_t,f\rangle$ is continuous on $[0,T]$, and for all $t\geq 0$ and all bounded measurable functions $f$ on $X$ one has
\begin{equation}\label{eq:measure}
\mu_tf=\mu_0 f + \int_0^t \mu_s\left(\Sigma[\mu_s]f\right)\d s
\end{equation}
\end{defi}

\subsection{Assumptions and well posedness result}

We make the following assumptions on the selection pressure operator. First, we require that for each measure $\mu$ bounded in total variation norm, the associated selection pressure $\Sigma[\mu]$ is a bounded Borel function
\[
\Sigma :\left\{
 \begin{array}{l}
\M_+(X) \to \mathcal{B}(X) \medskip\\
\mu \mapsto \Sigma[\mu].
\end{array}
\right.
\]
In addition, we require
\begin{equation} \label{hyp:lips}
\forall r>0,\, \exists k(r)>0,\, \forall \mu,\nu\in \M_+(X),\, \|\mu\|_{TV}, \|\nu\|_{TV} \leq r, \quad \left\|\Sigma[\mu] - \Sigma[\nu]\right\|_\infty \leq k(r)\|\mu - \nu\|_{TV}
\end{equation}
where $r\mapsto k(r)$ is a locally bounded function defined on $\R_+$. This assumption is reminiscent of the first assumption in Remark $2.2$ from~\cite{Desvillettes2008} in the context of $\Leb^1$ functions and~\cite{Canizo2012} in that of measures, but endowed with the dual bounded Lipschitz norm. Roughly, it can be interpreted as follows. Given two populations made of the same amount of people, the difference of their two selection pressures cannot grow too fast, compared to how different are these populations.
Then we assume
\begin{equation}\label{hyp:fitnessboundedfromabove}
\exists\,F>0\; \forall\, \mu \in\M_+(X),\quad \langle \mu,\Sigma[\mu]\rangle \leq F\mu(X).
\end{equation}
The quantity $\langle \mu,\Sigma[\mu]\rangle$ can interpreted as the fitness of a population described by a measure $\mu$, \textit{id est} the mean number of offsprings minus the mean number of deaths. Assumption~\eqref{hyp:fitnessboundedfromabove} thus enforces the population to grow at most of a factor $F$ per time unit. In addition, not imposing any lower bound to this value allows the population to eventually go extinct. This hypothesis is more general than the second one in Remark $2.2$ from~\cite{Desvillettes2008} in which the constant $F$ is replaced by a function $A_1 - A_2\left(\mu(X)\right)$ with $A_1 > 0$ and $A_2(z)\to\infty$ when $z\to\infty$. This stronger assumption though ensures that the total population remains bounded in large time, which is not the case with ours, see Remark~\ref{rmk:assumptions}. The ability for such a population to grow infinitely was already noted in~\cite{Canizo2012}, in which a slightly more general hypothesis than~\eqref{hyp:fitnessboundedfromabove}. Its equivalent in the langage deployed in the present paper would be
\[
\mu \mapsto \langle \mu,\Sigma[\mu]\rangle
\]
is bounded on every ball, which is straightforward with Hypothesis~\eqref{hyp:fitnessboundedfromabove}. These assumptions are enough to ensure well posedness results. The following one provides a sufficient condition for non extinction.
\beq \label{hyp:potentialgrowth}
\mu\in\M_+(X)~\text{and}~\Sigma~\text{are such that}~\mu(\left\{\Sigma[0] > 0\right\}) >0.
\eeq
It can be interpreted as the existence of a set of traits with positive $\mu-$measure that have the potential to proliferate, in the absence of competition.
\begin{theorem}\label{thm:wlpsdnss}
Assume the selection operator satisfies assumptions~\eqref{hyp:lips} and~\eqref{hyp:fitnessboundedfromabove}. Then for every nonnegative initial measure $\mu_0$, there exists a unique measure solution $(\mu_t)_{t\geq 0}$ to Equation~\eqref{eq:selection} in the sense of Definition~\ref{def:measure sol} that lies in $\Cr\left([0,T];\M_+(X)\right)$ for any $T>0$. In addition, if $(\nu_t)_{0\leq t\leq T}$ is a family of measures with nonnegative initial conditions $\nu_0$ that satisfies the same hypotheses, there exists a function $L=L(T)>0$ such that
\[
\forall\, t\in[0,T],\quad\|\mu_t - \nu_t\|_{TV} \leq \e^{L(T)t}\|\mu_0 - \nu_0\|_{TV}.
\]
In addition, for every $t\geq 0$, $\supp \mu_t \subset \supp \mu_0$.
If the additionnal hypothesis~\eqref{hyp:potentialgrowth} holds, then \[\inf_{t\geq0} \mu_t(X) > 0.\]
\end{theorem}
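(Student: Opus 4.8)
The plan is to exploit the pure selection structure, which makes the equation solvable by an explicit variation-of-constants formula. Integrating $\ppt n=\Sigma[n]n$ along the (trivial) characteristic at fixed $x$ suggests $\d\mu_t(x)=\e^{\int_0^t\Sigma[\mu_s](x)\d s}\d\mu_0(x)$, so I would attach to a curve $(\mu_s)_{s\in[0,T_0]}$ the nonnegative weight $w(t,x)=\e^{\int_0^t\Sigma[\mu_s](x)\d s}$ and the map $\Phi(\mu)_t:=w(t,\cdot)\mu_0$, and solve by Banach's fixed point theorem on $\Cr([0,T_0];\M_+(X))$ for $T_0$ small. A fixed point is exactly a measure solution: since $\Phi(\mu)_tf=\int_X fw(t,\cdot)\d\mu_0$ and $\p_tw=\Sigma[\mu_t]w$, differentiating gives $\ppt\Phi(\mu)_tf=\langle\Phi(\mu)_t,\Sigma[\mu_t]f\rangle$, which at $\mu_t=\Phi(\mu)_t$ is precisely \eqref{eq:measure}. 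Working in the ball $\{\sup_t\mu_t(X)\le R\}$ with $R=2\mu_0(X)$, assumption \eqref{hyp:lips} between $\mu_s$ and the null measure yields $\|\Sigma[\mu_s]\|_\infty\le M:=\|\Sigma[0]\|_\infty+k(R)R$, so $0\le w\le\e^{MT_0}$ and $\Phi$ maps the ball into itself once $\e^{MT_0}\le2$; using $|\e^a-\e^b|\le\e^{\max(a,b)}|a-b|$ together with \eqref{hyp:lips},
\[ \|\Phi(\mu)_t-\Phi(\nu)_t\|_{TV}=\int_X|w^\mu-w^\nu|\,\d\mu_0\le\e^{MT_0}\mu_0(X)\,k(R)\int_0^t\|\mu_s-\nu_s\|_{TV}\,\d s, \]
so $\Phi$ is a contraction for $T_0$ small enough, giving a unique nonnegative local solution.

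To globalise and obtain the quantitative estimates, I would test \eqref{eq:measure} against $\1_X$, getting $\mu_t(X)=\mu_0(X)+\int_0^t\langle\mu_s,\Sigma[\mu_s]\rangle\d s$; then \eqref{hyp:fitnessboundedfromabove} and Grönwall give the a priori bound $\mu_t(X)\le\e^{Ft}\mu_0(X)$. Since the local existence time depends on the current mass only through $R,M(R),k(R)$ and the mass stays below $\e^{FT}\mu_0(X)$ on $[0,T]$, the continuation windows are bounded below and finitely many steps cover $[0,T]$. For stability, with two solutions of masses bounded by $R_T=\e^{FT}\max(\mu_0(X),\nu_0(X))$, I would write $\mu_s(\Sigma[\mu_s]f)-\nu_s(\Sigma[\nu_s]f)=(\mu_s-\nu_s)(\Sigma[\mu_s]f)+\nu_s((\Sigma[\mu_s]-\Sigma[\nu_s])f)$, bound the two pieces by $M(R_T)\|\mu_s-\nu_s\|_{TV}$ and $R_Tk(R_T)\|\mu_s-\nu_s\|_{TV}$ for $\|f\|_\infty\le1$, take the supremum over $f$, and apply Grönwall, producing the bound with $L(T)=M(R_T)+R_Tk(R_T)$ and, with $\mu_0=\nu_0$, uniqueness. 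Support preservation is immediate: $\mu_t(A)=\int_Aw(t,\cdot)\d\mu_0$ with $w>0$, so $\mu_0(A)=0$ forces $\mu_t(A)=0$, whence $\supp\mu_t\subset\supp\mu_0$.

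For non-extinction, note first that $w>0$ makes each $\mu_t$ equivalent to $\mu_0$, so $t\mapsto\mu_t(X)$ is continuous and strictly positive; therefore $\inf_{[0,T]}\mu_t(X)>0$ for every finite $T$ and the only real issue is $t\to\infty$. Under \eqref{hyp:potentialgrowth} there is $\delta>0$ with $\mu_0(A_\delta)>0$ for $A_\delta:=\{\Sigma[0]>\delta\}$. Setting $g(t)=\mu_t(X)$ and $h(t)=\mu_t(A_\delta)$, \eqref{hyp:lips} gives $\Sigma[\mu_t](x)\ge\delta-k(g(t))g(t)$ on $A_\delta$, hence the differential inequality $h'(t)\ge(\delta-k(g(t))g(t))h(t)$. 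Choosing, by local boundedness of $k$, a threshold $\epsilon_0>0$ with $k(\epsilon)\epsilon\le\delta/2$ for $\epsilon\le\epsilon_0$, one sees that whenever $g(t)\le\epsilon_0$ the mass on $A_\delta$ grows, $h'(t)\ge\tfrac{\delta}{2}h(t)$; together with $g\ge h$ this says the extinction state is pushed away whenever the total mass is small.

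The hard part will be converting this local repulsion into a uniform lower bound, because the rate $\delta-k(g)g$ becomes negative during excursions where $g$ is large, so $h$ — and with it the depth $g\ge h$ guaranteed in a later dip — may be suppressed, and one must exclude that successive excursions drive the dips to zero. I would argue by a barrier/first-passage scheme: on any maximal interval where $g<\epsilon_0$ the function $h$ increases, so on that interval $g\ge h$ stays above the value of $h$ at the entry time, and it remains to show these entry values cannot tend to $0$. This is precisely the assertion that $h$ is an average Lyapunov function for the boundary point $0$, its logarithmic derivative tending to $\delta>0$ as $g\to0$; since the only invariant set contained in the extinction face $\{g=0\}$ is $\{0\}$ itself, a Hofbauer-type uniform persistence criterion applies and delivers $\inf_{t\ge0}\mu_t(X)>0$.
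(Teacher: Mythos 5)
Your existence, uniqueness, stability and support arguments follow essentially the same route as the paper: the same exponential-weight map $\Phi(\mu)_t=\e^{\int_0^t\Sigma[\mu_s]\d s}\mu_0$, Banach's fixed point on a ball of $\Cr([0,T_0];\M_+(X))$, the a priori bound $\mu_t(X)\leq\e^{Ft}\mu_0(X)$ from \eqref{hyp:fitnessboundedfromabove} and Gr\"onwall, and the same splitting of $\mu_s(\Sigma[\mu_s]f)-\nu_s(\Sigma[\nu_s]f)$ for stability. The one genuine deviation is that you dispense with the paper's two-step structure (a lemma for $\Sigma$ uniformly bounded from above, then the truncation $\Sigma_n=\min(\Sigma,n)$ and a passage to the limit): you note that \eqref{hyp:lips} applied to the pair $(\mu,0)$ already yields $\|\Sigma[\mu]\|_\infty\leq k(r)r+\|\Sigma[0]\|_\infty$ on any total-variation ball of radius $r$, so the weight is uniformly controlled without truncating. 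This is correct --- the paper derives exactly this estimate in the middle of its own proof to show the truncation is eventually inactive --- and it shortens the argument. Your continuation step (the local existence time is bounded below along the trajectory because the mass is a priori bounded on $[0,T]$) is also sound, and somewhat cleaner than the paper's discussion of whether the sum of the successive existence times diverges.

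The non-extinction claim is where your proposal has a genuine gap, which you partly acknowledge. The inequality $h'(t)\geq(\delta-k(g(t))g(t))h(t)$ with $h(t)=\mu_t(A_\delta)$ and $g(t)=\mu_t(X)$ is correct and shows that the total mass cannot stay below the threshold $\epsilon_0$ forever, i.e.\ $\limsup_{t\to\infty}\mu_t(X)\geq\epsilon_0$; but to conclude $\inf_{t\geq0}\mu_t(X)>0$ one must exclude deeper and deeper dips separated by large excursions, and neither of your two tools closes this. The barrier ``$g\geq h\geq h(t_{\mathrm{e}})$ on a dip entered at time $t_{\mathrm{e}}$'' is vacuous without a uniform lower bound on $h$ at entry times, since when $g(t_{\mathrm{e}})=\epsilon_0$ essentially all the mass may sit outside $A_\delta$; and Hofbauer-type average-Lyapunov / uniform-persistence theorems require a compact (or asymptotically compact) semiflow on a compact metric space, which you have not established --- and which the hypotheses do not provide --- for this dynamics on the infinite-dimensional, non-locally-compact space $(\M_+(X),\|\cdot\|_{TV})$. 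For comparison, the paper does not prove the unconditional statement either: its Section~3 lemma assumes $\mu_t\to\mu$ in total variation and concludes $\mu\neq0$, using precisely your computation on $A_\delta$ (chaotic or oscillatory behaviour is explicitly set aside). So your local-repulsion estimate already recovers what the paper actually establishes; the uniform bound $\inf_{t\geq0}\mu_t(X)>0$ as you have set it up remains unproved.
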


\begin{remark}\label{rmk:assumptions}
The assumptions above are enough to prove that such a measure solution is globally defined, but fail to ensure that the total population $\mu_t(X) = \|\mu_t\|_{TV}$ remains bounded for all times. Indeed, combining Assumption~\ref{hyp:fitnessboundedfromabove} with the mild formulation above, we obtain that for any $t\geq 0$, one has
\[
\mu_t(X) \leq \mu_0(X) + F\int_0^t \mu_s(X)\d s
\]
so Gr\"onwall's lemma provides
\[
\mu_t(X) \leq \mu_0(X)\e^{Ft}
\]
which ensures that the solution does not blow up in finite time. Now consider the very simple case $\Sigma[\mu] = \e^{-\|\mu\|_{TV}}\1_X$. One can show that this selection operator satisfies all the assumptions of the theorem
\[
\forall\, \mu \in\M(X),\; \Sigma[\mu] \in \mathcal{B}(X), \qquad \forall\, \mu \in\M_+(X),\; \langle\mu,\Sigma[\mu]\rangle = \|\mu\|_{TV}\e^{-\|\mu\|_{TV}} \leq \|\mu\|_{TV}
\]
(it is even uniformly bounded by $\e^{-1}$) and
\[
\forall\, \mu,\nu\in\M(X),\; \|\Sigma[\mu] - \Sigma[\nu]\|_\infty \leq \left|\|\mu\|_{TV} - \|\nu\|_{TV}\right| \leq \|\mu - \nu\|_{TV}
\]
but since $t\mapsto \|\mu_t\|_{TV}$ is solution of the ODE
\[
\ddt x = \e^{-x}x
\]
the total population goes to infinity in large time. An hypothesis in the flavour of `the planet is finite'~\cite{Smale1976} such as
\[
\exists M>0,\; \forall\, \mu\in\M(X),\quad \left[\|\mu\|_{TV} > M \Rightarrow \Sigma[\mu] \leq 0\right]
\]
ensures the boundedness of the population in large time. The phenomenon of infinite population in finite time is possible with a slightly weaker hypothesis than~\eqref{hyp:fitnessboundedfromabove}, see $(H5)$ in~\cite{Canizo2012}. In contrast, it is avoided in~\cite{Desvillettes2008} by such an assumption, that would translate here as the stronger condition
\[
\langle\mu,\Sigma[\mu]\rangle \leq \left(A_1 - A_2(\mu(X))\right)\mu(X)
\]
for all nonnegative finite measure $\mu$, with $A_1 >0$ and $A_2$ a function satisfyng $\lim_{z\to\infty} A_2(z) = \infty$.
\end{remark}

\section{Well-posedness and stability}

This section is devoted to the wellposedness result, stated on fairly general assumptions on the selection pressure operator. Our construction of a solution of Equation~\eqref{eq:measure} relies on a fixed-point method on the families of measures $\Cr([0,T];\M(X))$ with $T>0$ short enough, that we then iterate on time intervals of variable length. For a given $T>0$, the aforementionned space is a Banach space once endowed with the norm
\[
\sup_{t\in[0,T]} \|\mu_t\|_{TV}.
\]
Throughout the paper, when we refer to a family of measures, we used the notation $(\nu_t)_{0\leq t\leq T}$ or $(\nu)$. For such a family, we introduce a family of operators acting on the set of finite measures for $0\leq s\leq t\leq T$ by
\begin{equation}\label{eq:semigrp}
\mu M_{s,t}^{(\nu)} : f \mapsto \int_X f \e^{\int_s^t \Sigma[\nu_\sigma]\d \sigma} \d \mu.
\end{equation}
It is easy to see that this family defines a time inhomogeneous semigroup, since one can check that for $0\leq s\leq u\leq t\leq T$, it satisfies
\[
\left\lbrace
\begin{array}{l}
\mu M_{s,t}^{(\nu)} = \mu M_{s,u}^{(\nu)}M_{u,t}^{(\nu)} \medskip\\
\mu M_{s,s}^{(\nu)} = \mu.
\end{array}
\right.
\]
In order to prove the wellposedness of Equation~\eqref{eq:measure}, we first prove that there exists a unique family of measures denoted $(\mu_t)_{t\geq0}$ such that for all $t\geq0$, one has
\[
\mu_t = \mu_0 M_{0,t}^{(\mu)}
\]
or equivalently for all $t\geq0$ and $f\in\Cr_b(X)$
\[
\langle \mu_t,f\rangle = \int_X f \e^{\int_0^t \Sigma[\mu_s]\d s}\d \mu_0.
\]
As a first step, we prove the result for selection operators that are uniformly bounded from above.
\begin{lemma}\label{lem:fixdpnt}
Under the same assumptions as Theorem~\ref{thm:wlpsdnss}, if the selection pressure operator is in addition uniformly bounded from above, \textit{i.e.} 
\begin{equation}\label{hyp:boundedabove}
\exists\, n > 0,\; \forall\,\mu \in \M_+(X),\; \forall\, x\in X, \quad \Sigma[\mu](x) \leq n,
\end{equation}  
for any nonnegative initial data $\mu_0$ and final time $T>0$, there exists a unique family of measures $(\mu_t)_{0\leq t \leq T}\in \Cr([0,T];\M(X))$ such that for all $t\geq 0$
\beq \label{eq:fixpt}
\mu_t = \mu_0 M_{0,t}^{(\mu)}.
\eeq
Each measure of this family has the same support that of $\mu_0$.
\end{lemma}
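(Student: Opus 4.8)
The plan is to realise the sought family $(\mu_t)$ as the fixed point of the map $\Phi$ sending a family $(\nu_t)_{0\leq t\leq T}$ to $\bigl(\mu_0 M_{0,t}^{(\nu)}\bigr)_{0\leq t\leq T}$, and to apply the Banach fixed point theorem on a suitable closed ball of $\Cr([0,T];\M(X))$. Before anything else I would record a uniform bound on the selection pressure: writing $\Sigma[\nu_\sigma]=(\Sigma[\nu_\sigma]-\Sigma[0])+\Sigma[0]$ and combining the Lipschitz assumption~\eqref{hyp:lips} with the boundedness of $\Sigma[0]\in\mathcal B(X)$, every family $(\nu)$ with $\sup_\sigma\|\nu_\sigma\|_{TV}\leq R$ satisfies $\|\Sigma[\nu_\sigma]\|_\infty\leq S:=k(R)R+\|\Sigma[0]\|_\infty$ for all $\sigma$. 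This two-sided control of the exponent is exactly what makes the density $\e^{\int_s^t\Sigma[\nu_\sigma]\d\sigma}$ well behaved.

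Next I would fix $R:=\mu_0(X)\e^{nT}$ and let $\mathcal E_R$ be the set of nonnegative families in $\Cr([0,T];\M(X))$ with $\sup_t\|\nu_t\|_{TV}\leq R$, which is a closed, hence complete, subset. For $(\nu)\in\mathcal E_R$ the measure $\Phi(\nu)_t$ is nonnegative because its density is positive, and testing against $\1_X$ gives $\|\Phi(\nu)_t\|_{TV}\leq\e^{nt}\mu_0(X)\leq R$ by~\eqref{hyp:boundedabove}, so $\mathcal E_R$ is stable under $\Phi$. Continuity in time of $t\mapsto\Phi(\nu)_t$ in total variation then follows from the elementary estimate $\|\Phi(\nu)_t-\Phi(\nu)_s\|_{TV}\leq\e^{nT}\bigl(\e^{S|t-s|}-1\bigr)\mu_0(X)$, where the uniform bound $S$ is precisely what drives the exponent $\int_s^t\Sigma[\nu_\sigma]\d\sigma$ to zero as $t\to s$.

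Then I would establish the contraction. For $(\nu),(\tilde\nu)\in\mathcal E_R$, bounding $|f|\leq 1$ and using $|\e^a-\e^b|\leq\e^{\max(a,b)}|a-b|\leq\e^{nT}|a-b|$ pointwise together with~\eqref{hyp:lips} at radius $R$ yields
\[
\sup_t\|\Phi(\nu)_t-\Phi(\tilde\nu)_t\|_{TV}\leq\e^{nT}k(R)\,T\,\mu_0(X)\,\sup_\sigma\|\nu_\sigma-\tilde\nu_\sigma\|_{TV}.
\]
For $T$ small enough the prefactor is below $1$ and Banach's theorem provides a unique fixed point in $\mathcal E_R$; alternatively one reaches any $T$ in a single step by replacing the supremum norm with a Bielecki-type weighted norm $\sup_t\e^{-\lambda t}\|\cdot\|_{TV}$ and taking $\lambda$ large. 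To cover an arbitrary final time from the short-time result I would iterate, restarting from $\mu_{T_1}$; since~\eqref{hyp:boundedabove} gives the a priori bound $\mu_t(X)\leq\mu_0(X)\e^{nt}\leq R$ uniformly on $[0,T]$, the radius, hence the admissible step length, can be chosen uniformly, so finitely many steps suffice. Uniqueness in the whole space $\Cr([0,T];\M(X))$, and not merely in $\mathcal E_R$, follows because any solution of~\eqref{eq:fixpt} is automatically nonnegative and satisfies the same total variation bound, so it lies in $\mathcal E_R$.

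Finally, the support claim is immediate from the representation $\d\mu_t=\e^{\int_0^t\Sigma[\mu_s]\d s}\d\mu_0$: the density is a strictly positive bounded Borel function (pinched between $\e^{-St}$ and $\e^{nt}$), so $\mu_t$ and $\mu_0$ share the same null sets and therefore the same support. The main obstacle is the nonlinear, self-referential dependence of the semigroup on the unknown through $\Sigma$: the argument hinges on choosing the radius $R$ compatibly with the growth bound coming from~\eqref{hyp:boundedabove}, so that the Lipschitz constant $k(R)$ of~\eqref{hyp:lips} is frozen, and on the uniform bound $S$ that tames the exponential factor; once these are in place the fixed-point and globalisation steps are routine.
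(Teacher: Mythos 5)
Your proof is correct, and its skeleton (Banach fixed point for $(\nu)\mapsto\mu_0 M_{0,\cdot}^{(\nu)}$ on a closed ball of $\Cr([0,T];\M_+(X))$, with the contraction constant $\e^{nT}k(R)T\mu_0(X)$ obtained from the mean value inequality and Assumption~\eqref{hyp:lips}) is the same as the paper's. Where you genuinely diverge is in the passage from short time to arbitrary $T$. The paper keeps the radius of the ball tied to the mass of the restarting datum ($2\|\mu_{T_j}\|_{TV}$), so the admissible step length varies from step to step, and it must then prove that the series of step lengths diverges via a dichotomy on whether $h^{-1}\bigl(1/(\|\mu_{T_j}\|_{TV}k(2\|\mu_{T_j}\|_{TV}))\bigr)$ tends to $0$, using the a priori growth estimate only at the very end. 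You instead exploit the upper bound~\eqref{hyp:boundedabove} at the outset to freeze a single radius $R=\mu_0(X)\e^{nT}$ valid on all of $[0,T]$; this makes the contraction constant, hence the step length, uniform, so finitely many steps suffice (and your Bielecki-norm variant even dispenses with the iteration altogether). This is a cleaner and shorter route to the same conclusion; the paper's more laborious bookkeeping would only become necessary if no a priori bound on the radius were available. Two further small points in your favour: your remark that any solution of~\eqref{eq:fixpt} automatically lies in the ball settles uniqueness in the whole space, which the paper leaves implicit; and your two-sided pinching $\e^{-St}\leq\e^{\int_0^t\Sigma[\mu_s]\d s}\leq\e^{nt}$ (using $\Sigma[\mu]\in\mathcal B(X)$ and the bound $S=k(R)R+\|\Sigma[0]\|_\infty$) actually proves the stated \emph{equality} of supports, whereas the paper's proof only argues the inclusion $\supp\mu_t\subset\supp\mu_0$ and hedges on the converse.
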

\begin{proof}
For an initial datum $\mu_0\in\M(X)$, we want to prove that the function
\[
(\nu) \mapsto \mu_0 M_{0,t}^{(\nu)}
\]
has a unique fixed point. One can easily check that for any family $(\nu) \in \Cr\left([0,T];\M(X)\right)$ and final time $T>0$, the family $\left(\Gamma_\mu (\nu)\right)$ lies in $\Cr\left([0,T];\M(X)\right)$ and that if $\mu$ is nonnegative, then so is $\left(\Gamma_\mu(\nu)\right)_t$ for any $t\in [0,T]$. To start, we prove this lemma in short time, \textit{i.e.} for the final time $T>0$ small enough. For later prupose, let us introduce the set
\[
B_\mu^T := \left\{(\nu_t)_{0\leq t\leq T}\in\Cr([0,T];\M_+(X))|\, \nu_0 = \mu,\,\|\nu\|\leq 2\|\mu\|_{TV}\right\}.
\]
For $T$ a final time short enough so that if $(\nu)$ lies in $B_\mu^T$, so does $(\Gamma(\nu))$. Since $B_{\mu_0}^T$ is a closed subset of $\Cr([0,T];M(X))$, it is a complete metric space with the distance induced by the norm $\sup_{t\in[0,T]}\|\cdot\|_{TV}$, so we can apply the Banach fixed-point theorem.
\\
For $0\leq t \leq T$, $f\in\mathcal{B}(X)$ and $(\nu^1)$, $(\nu^2)$ two families of measures in $B_{\mu_0}^T$, we compute
\begin{align*}
\left|\left\langle\mu_0 M_{0,t}^{(\nu^1)} - \mu_0 M_{0,t}^{(\nu^2)}\right\rangle f\right| & \leq \|f\|_\infty \int_X \left|\e^{\int_0^t \Sigma[\nu^1_s]} - \e^{\int_0^t \Sigma[\nu^2_s]\d s}\right| \d \mu_0\\
	&\leq \|f\|_\infty\e^{nT}\int_X \int_0^t \left|\Sigma[\nu^1_s] - \Sigma[\nu^2_s]\right|\d s\, \d \mu_0
\end{align*}
by the mean value inequality and the uniform boundedness from above hypothesis~\eqref{hyp:boundedabove}. Using assumption~\eqref{hyp:lips} and taking the supremum in $t$ over $[0,T]$, we obtain
\[
\sup_{t\in[0,T]}\left\|\mu_0 M_{0,t}^{(\nu^1)} - \mu_0 M_{0,t}^{(\nu^2)}\right\|_{TV} \leq k(2\|\mu_0\|_{TV})T\e^{nT}\|\mu_0\|_{TV}\sup_{t\in[0,T]}\|\nu^1_t - \nu^2_t\|_{TV}\]
with $k(2\|\mu_0\|_{TV})$ coming from the fact that $(\nu^1)$ and $(\nu^2)$ both lie in $B_{\mu_0}^T$. This means that $\Gamma$ is a contraction for a final time $T_1$ small enough, and thus admits a unique fixed point on $[0,T_1]$. We denote $(\mu_t)_{0\leq t\leq T_1}$ this family.
\\
Now, we extend the result to any finite time. A classical way to proceed would be to iterate the previous construction on successive time intervals $[T,2T]$, $[2T,3T]$... changing each time the initial datum by the final measure of the previous iteration. However, in the case under study, the contraction constant depends on the total variation norm of the initial measure, so the finite time is likely to change at each iteration. At each step, the fixed point theorem is applied in $B_{\mu_{T_j}}^{T_{j+1}}$, so for every integer $j$, the final time $T_j$ is smaller than $(\log 2)/n$ and the `sum of intermediate final times'
\begin{equation}\label{sumtimes}
\sum_{i=0}^N T_j
\end{equation}
does not trivially go to infinity as $N\to\infty$. Our goal now is to prove that under the assumptions of the lemma, this property is actually true. The mentionned iteration procedure gives, for each integer $j\geq1$
\begin{equation}\label{eq:infinitetime}
\|\mu_{T_{j+1}}\|_{TV} \leq \e^{nT_{j+1}}\|\mu_{T_j}\|_{TV} \leq \e^{n(T_1 + \cdots + T_{j+1})}\|\mu_0\|_{TV}.
\end{equation}
With the previous computations, for all $j\in \N$, in order to apply the fixed point theorem, the final time $T_{j+1}$ shall satisfy
\[
k(2\|\mu_{T_j}\|_{TV})T_{j+1}\e^{nT_{j+1}}\|\mu_{T_j}\|_{TV} < 1,
\]
that we rewrite as
\[
h(T_{j+1}) < \frac{1}{\|\mu_{T_j}\|_{TV}k(2\|\mu_{T_j}\|_{TV})}.
\]
Since $h$ is a non-negative strictly increasing function from $[0,\infty)$ to itself, we can define for all $j\in \N$ 
\[
x_j := h^{-1}\left(\frac{1}{\|\mu_{T_j}\|_{TV}k(2\|\mu_{T_j}\|_{TV})}\right) > 0.
\]
If the sequence $(x_j)_{j\in\N}$ does not converge to $0$, then we define $T_{j+1} := \frac{x_j}{2}$ and the series~\eqref{sumtimes} diverges. If the opposite is true, then we deduce from the definition of $x_j$ that $\|\mu_{T_j}\|_{TV}k(2\|\mu_{T_j}\|_{TV}) \to \infty$ as $j\to\infty$. Since the function $k$ is locally bounded, we deduce that this is actually true for $\|\mu_{T_j}\|_{TV}$, and finally using the estimate~\eqref{eq:infinitetime} the series~\eqref{sumtimes} diverges in this case too. In both cases, we can extend the family $(\mu)$ to any finite time.
\\
It is easy to see from~\eqref{eq:fixpt} that the support of $\mu_t$ is included in that of $\mu_0$. The converse inclusion is not not necessarily true, since it is not forbiden that the function $\Sigma[\mu]$ takes the value $-\infty$ inside the domain $X$.
\end{proof}
We are now ready to state the proof of Theorem~\ref{thm:wlpsdnss}. It relies on the truncation of the unbounded operator $\Sigma$ and on the previous lemma.
\begin{proof}{(Theorem~\ref{thm:wlpsdnss})}
Let $T>0$ and $\Sigma$ satisfying Assumptions~\eqref{hyp:lips} and~\eqref{hyp:fitnessboundedfromabove}. For all $n\in\N^*$, we define the truncation $\Sigma_n[\mu] = \min(\Sigma[\mu],n)$, \textit{id est}
\begin{equation*}
\Sigma_n[\mu](x) = \left\{
\begin{array}{l}
\Sigma[\mu](x)\text{ if }\Sigma[\mu](x) \leq n\medskip\\
n\text{ otherwise }
\end{array}
\right.
\end{equation*}
and denote $\Gamma_{\mu_0}^n$ the associated operator.
We also denote $(\mu^n_t)_{0\leq t\leq T}$ the unique fixed-point of this operator in $\Cr([0,T];\M(X))$, provided by the previous lemma. 
We show that this sequence remains bounded as $n\to\infty$.
Fix $t\in[0,T]$. We show that for any $n\in\N$, the family $(\mu^n)$ is a solution of the equation ~\eqref{eq:measure} with initial condition $\mu_0$ and $\Sigma_n$ instead of $\Sigma$. To this end, consider the function $\phi_n(t,x)$ defined by 
\[
\phi_n(t,x) = \left\{
\begin{array}{l}
\e^{\int_0^t \Sigma_n[\mu^n_\sigma](x)\d \sigma}\text{ if }x\in\supp \mu_0\medskip\\
0\text{ otherwise. }
\end{array}
\right.
\]
For almost all $x\in X$ and all $t\in [0,T]$ and $n\in\N$, the function $t\mapsto \phi_n(t,x)$ is differentiable and its derivative satisfies
\[
\left|\p_t\phi_n(t,x)\right| = \left|\Sigma[\mu_t^n](x)\e^{\int_0^t \Sigma[\mu_s^n](x)\d s}\right|\leq n \e^{n T}.
\]
By Leibniz integral rule, the function $t\mapsto \langle\mu^n_t,f\rangle$ is differentiable for every $f\in\mathcal{B}(X)$ and one has
\begin{equation}\label{eq:deriv_n}
\ddt \langle\mu^n_t,f\rangle = \langle\mu^n_t,\Sigma_n[\mu^n_t]f\rangle.
\end{equation}
Thanks to Equations~\eqref{eq:semigrp} and~\eqref{eq:fixpt}, we easily see that if $\mu_0$ is non negative, so is $\mu^n_t$ for all $t\in[0,T]$ and $n\in\N$. With this observation, the definition of $\Sigma_n$ and Assumption~\eqref{hyp:fitnessboundedfromabove} we obtain
\[
\ddt \|\mu_t^n\|_{TV} = \langle \mu^n_t,\Sigma[\mu^n_t]\rangle + \langle \mu^n_t,\underbrace{\Sigma_n[\mu^n_t]-\Sigma[\mu^n_t]}_{\leq 0}\rangle \leq F\|\mu_t^n\|_{TV}
\]
and finally thanks to Grönwall's lemma
\[
\|\mu_t^n\|_{TV} \leq \|\mu_0\|_{TV}\e^{Ft}
\]
so the sequence $\left(\|\mu_t^n\|_{TV}\right)_{n\in\N}$ is bounded for any $t> 0$.
\\
Now, we show that this sequence $(\mu^n_t)_{0\leq t\leq T}$ is actually constant from a certain rank, thus providing a family of measures $(\mu^\infty_t)_{0\leq t\leq T}$ that is its strong limit.
\\
The previous computations shows that for a fixed $t\geq 0$, the sequence of measures $(\mu_t^n)_{n\in\N}$ is uniformly bounded in total variation norm by $\|\mu_0\|_{TV}\e^{Ft}$. Thus we can bound the family of functions $\left(\Sigma[\mu^n_t]\right)_{n\in\N}$ uniformly on $\N$ by $\left(\|\mu_0\|_{TV}\e^{Ft}\right) k(\|\mu_0\|_{TV}\e^{Ft})+\left\|\Sigma[0]\right\|_\infty$. Indeed, applying Assumption~\eqref{hyp:lips} on the couple of measures $(\mu,0)$, we obtain
\[
\left\|\Sigma[\mu]-\Sigma[0]\right\|_\infty \leq k\left(\|\mu\|_{TV}\right)\|\mu\|_{TV}
\]
and then
\[
\left\|\Sigma[\mu]\right\|_\infty \leq k\left(\|\mu\|_{TV}\right)\|\mu\|_{TV} + \|\Sigma[0]\|_\infty.
\]
We deduce that
\[
\forall n \geq \left(\|\mu_0\|_{TV}\e^{Ft}\right) k(\|\mu_0\|_{TV}\e^{Ft}) + \|\Sigma[0]\|_\infty\; \forall\, s\in[0,t], \quad \Sigma_n[\mu^n_s] = \Sigma[\mu^n_s].
\]
So for $n$ large enough, familiar computations provide
\[\|\mu_t^{n+p} - \mu_t^n\|_{TV} \leq \e^{(\left(\|\mu_0\|_{TV}\e^{Ft}\right)k(\|\mu_0\|_{TV}\e^{Ft}) + \|\Sigma[0]\|_\infty)t}\|\mu_0\|_{TV}k(\|\mu_0\|_{TV}\e^{Ft}) \int_0^t \| \mu^{n+p}_s - \mu^n_s\|_{TV}\d s
\]
and finally Grönwall lemma provides the claimed result. Now we prove that the family of measures $(\mu_t^\infty)_{0\leq t\leq T}$ is a measure solution in the sense of Definition~\ref{def:measure sol}. Integrating Equation~\eqref{eq:deriv_n} in time with $n$ large enough, we obtain
\[
\langle \mu_t^n,f\rangle = \langle \mu_0,f\rangle + \int_0^t \langle \mu_s^n,\Sigma[\mu_s^n]f\rangle\ \d s.
\]
Since we proved that $\lim_{n\to\infty} \|\mu_t^n - \mu_t^\infty\|_{TV} = 0$, the left handside converges towards $\langle \mu_t^\infty ,f\rangle$. It remains to prove that
\[
\langle \mu_s^n,\Sigma[\mu_s^n]f\rangle \to \langle \mu_s^\infty,\Sigma[\mu_s^\infty]f\rangle
\]
for every $s\in[0,T]$ and $f\in\mathcal{B}(X)$. This is done by writing
\begin{align*}
\left| \langle \mu_s^n,\Sigma[\mu_s^n]f\rangle - \langle \mu_s^\infty,\Sigma[\mu_s^\infty]f\rangle \right| \leq & \left| \langle \mu_s^n - \mu_s^\infty,\Sigma[\mu_s^n]f\rangle \right| + \left|\langle \mu_s^\infty,\Sigma[\mu_s^n]f - \Sigma[\mu_s^\infty]f\rangle \right| \\
 \leq & \|f\|_\infty\left(2k\left(\|\mu_0\|_{TV}\e^{Ft}\right)\|\mu_0\|_{TV}\e^{Ft} + \|\Sigma[0]\|_\infty\right)\|\mu^n_s - \mu^\infty_s \|_{TV}
\end{align*}
thanks to some estimates previously established.
\\
Now we prove the stability result.
Let $\mu^1_0$ and $\mu^2_0$ two nonnegative measures on $X$ and denote $\mu^1_t$ and $\mu^2_t$ the corresponding solution at time $t\in[0,T]$, respectively, thus both satisfying Equation~\eqref{eq:measure}, for an arbitrary final time $T>0$. Then for $f\in\mathcal{B}(X)$, one has
\begin{align*}
|\mu^1_tf - \mu^2_tf| &\leq |\mu^1_0f - \mu_0^2f| + \int_0^t\langle\mu_s^1,\left|\left(\Sigma[\mu_s^1] - \Sigma[\mu_s^2]\right)f\right|\rangle + \left|\langle\mu_s^1 - \mu_s^2,\Sigma[\mu_s^2]f\rangle\right|\d s\\
	&\leq \|f\|_\infty\left(\|\mu_0^1 - \mu_0^2\|_{TV} + L(T)\int_0^t \|\mu^1_s - \mu^2_s\|_{TV}\right)
\end{align*}
with
\[
L(T) := 2\sup_{t\in[0,T]}\left((\|\mu_0^1\|_{TV} + \|\mu_0^2\|_{TV})\e^{Ft}k((\|\mu_0^1\|_{TV} + \|\mu_0^2\|_{TV})\e^{Ft}) + \|\Sigma[0]\|_\infty\right)
\]
and finally by Grönwall's lemma provide the claimed inequality. This stability result provides the uniqueness of the solution. Finally, the claim on the support of $\mu_t$ comes from the one from Lemma~\ref{lem:fixdpnt}, but supplemented with the boundedness in supremum norm of $\Sigma[\mu]$, that fordids extinction of a trait in finite time. Thus, if $\supp \mu_0 \neq \emptyset$, then $\|\mu_t\|_{TV} >0$ for any $t>0$. The proof of the final claim in large time is postponed to the next section.
\end{proof}

\section{Various asymptotic behaviours}

In this section, we give a sufficient condition of non extinction and provide some examples of selection operators to illustrate different dynamics encompassed by our assumptions.

\subsection{A sufficient condition for non extinction}

Our rather general assumptions allows various dynamics to occur. For example, a system studied in~\cite{May1975} might be rewritten $X = {x_0,x_1,x_2}$, and
\begin{equation}\label{Discrete_osc}
\Sigma[\mu](x_i) = 1 - \mu(\{x_i\}) - 2\mu(\{x_{i+1}\})
\end{equation}
with indices in $\Z/3\Z$
and we easily check that such function satisfies hypotheses~\eqref{hyp:lips} and~\eqref{hyp:fitnessboundedfromabove} with
\[
\|\Sigma[\mu^1] - \Sigma[\mu^2]\|_\infty \leq 2 \|\mu^1 - \mu^2\|_{TV}\qquad \text{and} \qquad \langle\mu,\Sigma[\mu]\rangle = |\mu\|_{TV}\left(1-|\mu\|_{TV}\right)\leq |\mu\|_{TV}.
\]
This system displays a periodic behaviour, as proved in the original paper. Otherwise, the solution to equation~\eqref{eq:measure} can either go extinct, converge to an equilibrium or display a chaotic behaviour. Since this last option is not really considered, we have to decide between the two first possibilities. This is the purpose of the following lemma.
\begin{lemma}
Assume the selection operator satisfies hypotheses~\eqref{hyp:lips} and~\eqref{hyp:fitnessboundedfromabove}. Let $\mu_0$ be an initial condition satisfying Assumption~\eqref{hyp:potentialgrowth}, and denote $(\mu_t)_{t\geq 0}$ the unique measure solution to~\eqref{eq:selection} with initial condition $\mu_0$. If \[\mu_t \xrightarrow[t\to\infty]{TV} \mu,\] then $\mu\neq 0$.
\end{lemma}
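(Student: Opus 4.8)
The plan is to argue by contradiction, exploiting the mild (semigroup) representation of the solution together with the Lipschitz assumption~\eqref{hyp:lips} to produce exponential growth on the set of traits singled out by Assumption~\eqref{hyp:potentialgrowth}. Suppose the limit $\mu$ is the zero measure, so that $\|\mu_t\|_{TV}\to 0$ as $t\to\infty$. Since $t\mapsto\|\mu_t\|_{TV}$ is continuous and convergent, it is bounded, say by $R:=\sup_{t\geq 0}\|\mu_t\|_{TV}<\infty$, and I set $K:=\sup_{[0,R]}k$, which is finite because $k$ is locally bounded. Applying~\eqref{hyp:lips} to the pair $(\mu_t,0)$ gives $\|\Sigma[\mu_t]-\Sigma[0]\|_\infty\leq K\|\mu_t\|_{TV}\to 0$, so that $\Sigma[\mu_t]$ converges uniformly to $\Sigma[0]$.

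Next I would localize on a set of traits where $\Sigma[0]$ stays away from $0$. Writing $A:=\{\Sigma[0]>0\}$ and $A_\delta:=\{\Sigma[0]>\delta\}$ for $\delta>0$, the Borel sets $A_{1/m}$ increase to $A$, so by continuity from below $\mu_0(A_{1/m})\to\mu_0(A)>0$ by~\eqref{hyp:potentialgrowth}; I fix $\delta>0$ with $c:=\mu_0(A_\delta)>0$. By the uniform convergence above, there is a time $s_0$ such that $K\|\mu_s\|_{TV}<\delta/2$ for all $s\geq s_0$, whence for every $x\in A_\delta$ and every $s\geq s_0$
\[
\Sigma[\mu_s](x)\geq \Sigma[0](x)-\tfrac{\delta}{2}>\tfrac{\delta}{2}.
\]
On the initial window $[0,s_0]$ I only need a crude lower bound: the estimate $\|\Sigma[\mu_s]\|_\infty\leq K\|\mu_s\|_{TV}+\|\Sigma[0]\|_\infty\leq KR+\|\Sigma[0]\|_\infty=:C_1$ yields $\Sigma[\mu_s](x)\geq -C_1$ for all $s$ and all $x$.

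Finally I would insert the bounded measurable test function $f=\1_{A_\delta}$ into the fixed-point identity $\mu_t=\mu_0M_{0,t}^{(\mu)}$ of Lemma~\ref{lem:fixdpnt}, which by~\eqref{eq:semigrp} reads
\[
\mu_t(A_\delta)=\int_{A_\delta}\e^{\int_0^t\Sigma[\mu_s](x)\d s}\d\mu_0(x).
\]
Splitting the time integral over $[0,s_0]$ and $[s_0,t]$ and using the two bounds above gives $\int_0^t\Sigma[\mu_s](x)\d s\geq -C_1 s_0+\frac{\delta}{2}(t-s_0)$ uniformly in $x\in A_\delta$, hence
\[
\mu_t(A_\delta)\geq c\,\e^{-C_1 s_0+\frac{\delta}{2}(t-s_0)}\xrightarrow[t\to\infty]{}+\infty.
\]
This contradicts $\mu_t(A_\delta)\leq\mu_t(X)=\|\mu_t\|_{TV}\to 0$, and therefore $\mu\neq 0$.

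The delicate points are bookkeeping rather than conceptual. One must be entitled to test the identity $\mu_t=\mu_0M_{0,t}^{(\mu)}$ against the merely measurable indicator $\1_{A_\delta}$, which is legitimate since the two sides are equal as measures and the operator~\eqref{eq:semigrp} acts on all bounded Borel functions; and one must carefully separate the behaviour near $t=0$, where $\Sigma[\mu_s]$ may be negative, from the large-time regime where the positivity of $\Sigma[0]$ on $A_\delta$ takes over. The core idea, namely that the assumed decay $\|\mu_t\|_{TV}\to 0$ forces $\Sigma[\mu_s]$ close to $\Sigma[0]$ and thereby triggers genuine exponential growth of $\mu_t(A_\delta)$, is exactly what renders extinction impossible.
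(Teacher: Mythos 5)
Your proof is correct and follows essentially the same route as the paper: argue by contradiction, use Assumption~\eqref{hyp:lips} applied to the pair $(\mu_t,0)$ to show $\Sigma[\mu_t]$ is uniformly close to $\Sigma[0]$ once $\|\mu_t\|_{TV}$ is small, restrict to a positive-$\mu_0$-measure set where $\Sigma[0]$ exceeds a fixed $\delta>0$, and read off exponential growth of $\mu_t$ on that set from the representation $\mu_t=\mu_0M_{0,t}^{(\mu)}$, contradicting extinction. Your write-up is in fact slightly more explicit than the paper's on two points it leaves implicit, namely the continuity-from-below argument producing the set $A_\delta$ and the crude lower bound handling the initial time window.
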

\begin{proof}
Assume by contradiction that $\mu = 0$. Then for all $\epsilon >0$, there exists $T>0$ such that for all $t\geq T$, one has $\|\mu_t\|_{TV} < \epsilon$. Fix $\eta > 0$ as small as needed. Thanks to Hypothesis~\eqref{hyp:potentialgrowth}, there exists $A \subset \left\{\Sigma[0] > 0\right\}$ with $\mu_0(A)>0$ and for all $x\in A$, $\Sigma[0](x) > \eta$. With Hypotheses~\eqref{hyp:lips}, one has for all $t\geq T$
\[
\left\|\Sigma[\mu_t] - \Sigma[0]\right\|_\infty \leq \epsilon k(\epsilon),
\]
thus, for all $x\in A$, one has the estimate
\[
\Sigma[\mu_t](x) > \eta - \epsilon k(\epsilon) > \frac{\eta}{2}
\]
for $\epsilon$ small enough, \textit{i.e.} $T$ large enough. This provides a contradiction once it is noted that
\[\epsilon > \|\mu_t\|_{TV} \geq \int_A \e^{\int_0^t \Sigma[\mu_s] \d s} \d \mu_0\ \geq C_T\int_A \e^{\int_T^t \Sigma[\mu_s] \d s} \d \mu_0 > C_T\e^{\frac{\eta}{2}(t - T)}\mu_0(A)\]
with $C_T$ a positive constant, for all $t\geq T$.
\end{proof}

\subsection{Cannibalism revisited\label{sec:canni}}

In this section, we borrow an example from~\cite{PerthameTransport}. In this example, the trait $x$ lies in $X = [0,\infty)$ and represent the degree of cannibalism. For a measure $\mu$ that have both finite zeroth and first moment, the selection operator is
\begin{equation}\label{op:canni}
\Sigma[\mu](x) = r + \alpha x \mu(X) - \langle \mu,Id\rangle
\end{equation}
with $r$ the growth rate in the absence of cannibalism and $\alpha \in (0,1]$ the efficiency in offspring production from intraspeciﬁc predation. As noted in this book, unbounded levels of predation seems very unrealistic, so one can consider the trait instead in a compact set $[0,A]$ with $A>0$. This hypothesis is even necessary in the present paper, since otherwise the operator $\Sigma[\mu]$ does not lie in $\Leb^\infty$. In this setting, we recover the result presented in the book.
\begin{proposition}
There is a unique global positive measure solution $(\mu_t)_{t\geq 0}$ to the equation given by the selection operator~\eqref{op:canni}. In addition, if $M:=\sup \supp \mu_0 \in \supp \mu_0$, and $\langle \mu_0,Id\rangle \leq \frac{r}{1-\alpha}$, one has the asymptotic concentration on the trait $M$, \textit{i.e.}
\[
\lim_{t\to\infty} \mu_t = \frac{r}{M(1-\alpha)} \delta_M.
\]
\end{proposition}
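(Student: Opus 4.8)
The plan is to derive the result from the general theory by making the operator~\eqref{op:canni} explicit, and then to read off the long-time behaviour from the exponential form of the semigroup~\eqref{eq:fixpt}. First I would settle existence and uniqueness: on the compact trait space $X=[0,A]$, writing $N(t):=\mu_t(X)$ and $P(t):=\langle\mu_t,Id\rangle$, the difference $\Sigma[\mu]-\Sigma[\nu]=\alpha\,Id\,(\mu(X)-\nu(X))-(\langle\mu,Id\rangle-\langle\nu,Id\rangle)$ satisfies $\|\Sigma[\mu]-\Sigma[\nu]\|_\infty\leq(1+\alpha)A\,\|\mu-\nu\|_{TV}$, so~\eqref{hyp:lips} holds with $k\equiv(1+\alpha)A$; moreover $\langle\mu,\Sigma[\mu]\rangle=r\mu(X)-(1-\alpha)\mu(X)\langle\mu,Id\rangle\leq r\mu(X)$ since $Id\geq0$ on $X$, which is~\eqref{hyp:fitnessboundedfromabove} with $F=r$. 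Theorem~\ref{thm:wlpsdnss} then provides the unique global nonnegative solution, with $\supp\mu_t\subseteq\supp\mu_0\subseteq[0,M]$ for every $t$.

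The crux is to specialise~\eqref{eq:fixpt}. Because $\int_0^t\Sigma[\mu_s](x)\d s=rt+\alpha x\int_0^tN(s)\d s-\int_0^tP(s)\d s$, the $x$-dependence factorises and, setting $a(t):=\alpha\int_0^tN(s)\d s$,
\[
\langle\mu_t,f\rangle=\e^{\,rt-\int_0^tP(s)\d s}\int_X f(x)\,\e^{\,a(t)x}\d\mu_0(x),\qquad\text{hence}\qquad\mu_t=\e^{\,rt-\int_0^tP(s)\d s}\,\e^{\,a(t)\,Id}\mu_0 .
\]
Thus $\mu_t$ is nothing but an exponential tilt of $\mu_0$, and as $a(t)\to\infty$ this tilt drives all the mass to $M=\sup\supp\mu_0$; I should stress that the announced limit is to be understood narrowly (weak-$*$ against $\Cr([0,A])$), as no convergence to $\delta_M$ in total variation can hold when $\mu_0$ is diffuse. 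To activate the tilt I must show $a(t)\to\infty$. Testing~\eqref{eq:measure} against $\1_X$ gives $N'=rN-(1-\alpha)NP$, and since $\supp\mu_t\subseteq[0,M]$ forces $P\leq MN$, I get $N'\geq N\bigl(r-(1-\alpha)MN\bigr)$; comparison with the logistic equation, using $N(0)=\mu_0(X)>0$ (a consequence of $M\in\supp\mu_0$), yields $\liminf_{t\to\infty}N(t)\geq\beta:=\frac{r}{(1-\alpha)M}>0$, whence $a(t)\to\infty$.

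Granting $a(t)\to\infty$, I would establish the concentration of the normalised measure $\hat\mu_t:=\mu_t/N(t)=\e^{a(t)\,Id}\mu_0\big/\int_X\e^{a(t)y}\d\mu_0(y)$: for every $\epsilon>0$ a Laplace-type estimate gives $\hat\mu_t([0,M-\epsilon])\leq\frac{\mu_0([0,M-\epsilon])}{\mu_0((M-\tfrac{\epsilon}{2},M])}\,\e^{-a(t)\epsilon/2}\to0$, the denominator being positive exactly because $M\in\supp\mu_0$; hence $\hat\mu_t\to\delta_M$ narrowly and the mean trait $\bar x(t):=P/N\to M$. Feeding this back into $N'=N\bigl(r-(1-\alpha)\bar x(t)N\bigr)$ and comparing, for each $\delta>0$, with a logistic equation of carrying capacity $\frac{r}{(1-\alpha)(M-\delta)}$ gives $\limsup N\leq\beta$, so $N(t)\to\beta$. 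Writing $\langle\mu_t,f\rangle=N(t)\langle\hat\mu_t,f\rangle$ and letting $t\to\infty$ then yields $\mu_t\to\beta\,\delta_M=\frac{r}{M(1-\alpha)}\delta_M$ narrowly, as claimed.

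The main obstacle is the uniform-in-time lower bound $\liminf N\geq\beta$: without it $a(t)$ could stay bounded, the tilt would freeze and no concentration would occur, so the logistic comparison is really the engine of the proof. The second delicate point is purely measure-theoretic, namely converting $a(t)\to\infty$ into narrow convergence to $\delta_M$, which is where $M\in\supp\mu_0$ is indispensable. The remaining hypothesis $\langle\mu_0,Id\rangle\leq\frac{r}{1-\alpha}$ places the initial first moment below its equilibrium value $\beta M=\frac{r}{1-\alpha}$; concretely it gives $N'(0)=N(0)\bigl(r-(1-\alpha)\langle\mu_0,Id\rangle\bigr)\geq0$, so that the population is initially non-declining, which is the regime in which the passage from the $\liminf$ and $\limsup$ bounds to genuine convergence is cleanest.
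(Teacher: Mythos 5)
Your proof is correct, but it follows a genuinely different route from the paper's. The paper also normalises ($\nu_t=\mu_t/\mu_t(X)$), but its engine is a moment argument: Jensen's inequality yields the differential inequality $\ddt\langle\mu_t,Id\rangle\geq\langle\mu_t,Id\rangle(r-(1-\alpha)\langle\mu_t,Id\rangle)$, two contradiction arguments pin down $\langle\mu_t,Id\rangle\to\frac{r}{1-\alpha}$ (this is where the hypothesis $\langle\mu_0,Id\rangle\leq\frac{r}{1-\alpha}$ is invoked, via a Gr\"onwall estimate), then an integrability argument shows the variance of $\nu_t$ vanishes, forcing concentration at some $x^*$, which is finally identified as $M$ by a growth estimate on $\nu_t([M-\eta,M])$. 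You instead exploit the explicit exponential-tilt representation $\mu_t=\e^{rt-\int_0^tP}\e^{a(t)Id}\mu_0$ coming from the fixed-point formula~\eqref{eq:fixpt}, prove $a(t)\to\infty$ by a logistic supersolution comparison, and obtain concentration at $M$ \emph{directly} via the Laplace estimate, closing the loop with a second logistic comparison for $\limsup N$. Your route buys several things: the concentration point is identified as $M$ in one stroke rather than in a separate step; the Laplace bound is quantitative (an explicit rate in $a(t)$); and, as you note, the hypothesis $\langle\mu_0,Id\rangle\leq\frac{r}{1-\alpha}$ is never actually used, so you prove a slightly stronger statement. The paper's moment/variance method is less dependent on having a closed-form tilt and would generalise more readily to operators without an explicit solution formula. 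Your remark that the convergence can only be narrow (not in total variation) when $\mu_0$ is diffuse is accurate and consistent with the paper's own proof, which likewise only establishes $\nu_t\rightharpoonup\delta_M$ together with $\mu_t(X)\to\frac{r}{M(1-\alpha)}$. Two small points to tighten: justify that the unique measure solution coincides with the fixed point $\mu_0M_{0,t}^{(\mu)}$ (this is exactly what the construction in the proof of Theorem~\ref{thm:wlpsdnss} provides, or alternatively verify that your explicit formula satisfies~\eqref{eq:measure} and invoke uniqueness), and note that $N$ is $C^1$ (immediate from the continuity of $s\mapsto\mu_s$ in total variation) so that the ODE comparison arguments apply.
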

\begin{proof}
First, we easily see that the operator given by~\eqref{op:canni} satisfies the assumptions of Theorem~\ref{thm:wlpsdnss}, since $X$ is bounded. Indeed, one has
\[
\forall\, \mu\geq 0,\;\langle \mu,\Sigma[\mu]\rangle = r\mu(X) -(1-\alpha)\mu(X)\langle \mu,Id\rangle \leq r\mu(X)
\]
and
\[
\forall\, \mu,\nu,\; \left\|\Sigma[\mu] - \Sigma[\nu]\right\|_\infty \leq (1+\alpha)A\|\mu-\nu\|_{TV}.
\]
Without loss of generality, we choose a family of measures solutions $(\mu_t)_{t\geq 0}$ such that $\mu_0(X) = 1$. Throughout the proof, we will use a rescaled family of measures defined by
\begin{equation}\label{eq:munu}
\nu_t := \e^{-\int_0^t \left(r - (1-\alpha)\langle \mu_s,Id\rangle\right)\d s}\mu_t
\end{equation}
and easily check that it satisfies
\[
\langle \nu_t,f\rangle = \frac{\langle \mu_t,f\rangle}{\mu_t(X)}
\]
so for all $t\geq 0$, $\nu_t$ is a probability measure on $X$. In addition, for all $t\geq0$ and $f\in\mathcal{B}(X)$ one has
\[
\langle \nu_t,f\rangle = \langle \mu_0,f\rangle + \int_0^t \langle \nu_s, \alpha\left(\mu_s(X)Id - \langle \mu_t,Id\rangle\right)f\rangle.
\]
We have the inequality
\[
\mu_t(X)\langle \mu_t,Id^2\rangle = \mu_t(X)^2\langle \nu_t,Id^2\rangle \geq \mu_t(X)^2\langle \nu_t,Id\rangle^2 = \langle \mu_t,Id\rangle^2
\]
using Jensen's inequality. Now using this inequality, we compute
\begin{align}
\ddt \langle \mu_t,Id\rangle &= \left(r \langle \mu_t,Id\rangle + \alpha \mu_t(X)\langle \mu_t,Id^2\rangle - \langle \mu_t,Id\rangle^2\right)\nonumber\\
& \geq \langle \mu_t,Id\rangle\left(r - (1-\alpha)\langle \mu_t,Id\rangle\right).\label{ineq:muId}
\end{align}
Assume by contradiction that
\[
\lim_{t\to\infty} \langle\mu_t,Id\rangle = 0.
\]
Fix $\epsilon >0$. There exists $t_0>0$ such that for all $t\geq t_0$, one has $\langle \mu_t,Id\rangle \leq \epsilon$. Integrating~\eqref{ineq:muId} for $t\geq t_0$, one has
\[
\langle \mu_t,Id\rangle \geq \langle \mu_0,Id\rangle C \e^{\epsilon (t-t_0)}
\]
with $C$ a positive constant, yielding a contradiction. Now we compute
\begin{align*}
\ddt \left(r - (1-\alpha)\langle \mu_t,Id\rangle\right) &= -(1-\alpha)\left(r \langle \mu_t,Id\rangle + \alpha \mu_t(X)\langle \mu_t,Id^2\rangle - \langle \mu_t,Id\rangle^2\right)\\
& \leq -(1-\alpha)\langle \mu_t,Id\rangle\left(r - (1-\alpha)\langle \mu_t,Id\rangle\right).
\end{align*}
and Gr\"onwall's lemma yields
\begin{equation}\label{ineq:gronwall}
r - (1-\alpha)\langle \mu_t,Id\rangle \leq (r - (1-\alpha)\langle \mu_0,Id\rangle) \e^{-(1-\alpha)\int_0^t \langle \mu_s,Id\rangle \d s}.
\end{equation}
In addition, since $\langle \mu_t,Id\rangle$ does not vanish, one has
\[
\e^{-(1-\alpha)\int_0^t \langle \mu_s,Id\rangle \d s} \leq \e^{-(1-\alpha)\epsilon t}
\]
for some $\epsilon >0$. Since by assumption $\langle\mu_0,Id\rangle \leq \frac{r}{1-\alpha}$, the last inequality combined with~\eqref{ineq:gronwall} provides
\[
\lim_{t\to\infty} r - (1-\alpha)\langle \mu_t,Id\rangle \leq 0.
\]
Assume by contradiction that this limit is negative. Since
$t\mapsto r - (1-\alpha)\langle \mu_t,Id\rangle$ is continuous, there exists $\epsilon>0$ and $t_0>$ such that for all $t\geq t_0$, one has
\[
r - (1-\alpha)\langle \mu_t,Id\rangle \leq -\epsilon < 0.
\]
 For $t\geq t_0$, one has
\[
\mu_t(X) =  \e^{\int_0^t \left(r - (1-\alpha)\langle \mu_s,Id\rangle\right)\d s} \leq C\e^{-\epsilon (t-t_0)}
\]
with $C$ a positive constant. Thus, for such $t$, one has
\[
\langle \mu_t,Id\rangle \leq M\mu_t(X) \leq MC\e^{-\epsilon (t-t_0)}
\]
and finally
\[
r - (1-\alpha)\langle \mu_t,Id\rangle \geq r - (1-\alpha)MC\e^{-\epsilon t} > 0
\]
for $t$ large enough, yielding a contradiction. Finally, we obtain
\begin{equation}\label{lim:mu}
\lim_{t\to\infty} \langle \mu_t,Id\rangle = \frac{r}{1-\alpha},
\end{equation}
from which we also deduce that $t\mapsto \mu_t(X)$ does not vanish. Now we prove that the variance of the measures $(\nu_t)_{t\geq0}$ vanishes. To obtain this property, we write
\begin{align*}
\langle \nu_t,Id\rangle &= \langle \mu_0,Id\rangle + \int_0^t \langle \nu_s, \alpha\left(\mu_s(X)Id^2 - \langle \mu_t,Id\rangle Id\right)\rangle \\
& = \langle \mu_0,Id\rangle + \alpha\int_0^t \mu_s(X)\langle \nu_s,Id^2\rangle - \langle \mu_s,Id\rangle \langle \nu_s,Id\rangle \d s \\
& = \langle \mu_0,Id\rangle + \alpha\int_0^t \mu_s(X)\left[\langle \nu_s,Id^2\rangle - \langle \nu_s,Id\rangle^2 \right]\d s
\end{align*}
and Jensen's inequality again ensures that the integrand is non negative. Since every other term is also non negative and $\langle \nu_t,Id\rangle \leq M$, the integral on the right handside is finite, and thus the variance of $(\nu_t)_{t\geq 0}$ vanishes. Since $\nu_t$ does not vanish, it means that this family of measures concentrates on some point $x^*\in [0,A]$, and so does $\mu_t$. Now assume by contradiction that $x^* < M$. Then one can find a positive number $\eta < M-x^*$. We compute
\begin{align*}
\ddt \nu_t([M-\eta,M]) &= \alpha \langle \nu_t,\left(Id \mu_t(X) - \langle \mu_t,Id\rangle\right)\1_{[M-\eta,M]}\rangle \\
&\geq \alpha \mu_t(X)\left((M-\eta - \langle \nu_t,Id\rangle\right)\nu_t([M-\eta,M])\\
&\geq \alpha m\left((M-\eta - x^*\right)\nu_t([M-\eta,M])
\end{align*}
since $t\mapsto \langle \nu_t,Id\rangle$ is increasing, with $0<m\leq \mu_t(X)$. This provides
\[
\nu_t([M-\eta,M]) \geq \nu_0([M-\eta,M])\e^{\alpha m (M-\eta - x^*)t}
\]
which is a contradiction, so one has
\begin{equation}\label{lim:nu}
\nu_t \rightharpoonup_{t\to\infty} \delta_M.
\end{equation}
Finally, we combine~\eqref{eq:munu}~\eqref{lim:mu} and~\eqref{lim:nu}, to obtain that
\[
\lim_{t\to\infty} \mu_t(X) = \lim_{t\to\infty} \e^{\int_0^t \left(r - (1-\alpha)\langle \mu_s,Id\rangle\right)\d s} = \frac{r}{M(1-\alpha)}
\]
which ends the proof.
\end{proof}
The properties highlighted in the previous proposition are illustrated in Figure~\ref{fig:canni}, with parameters $r=3$, $\alpha = 0.8$ and $M=1$.
\begin{figure}
\includegraphics[width=\textwidth]{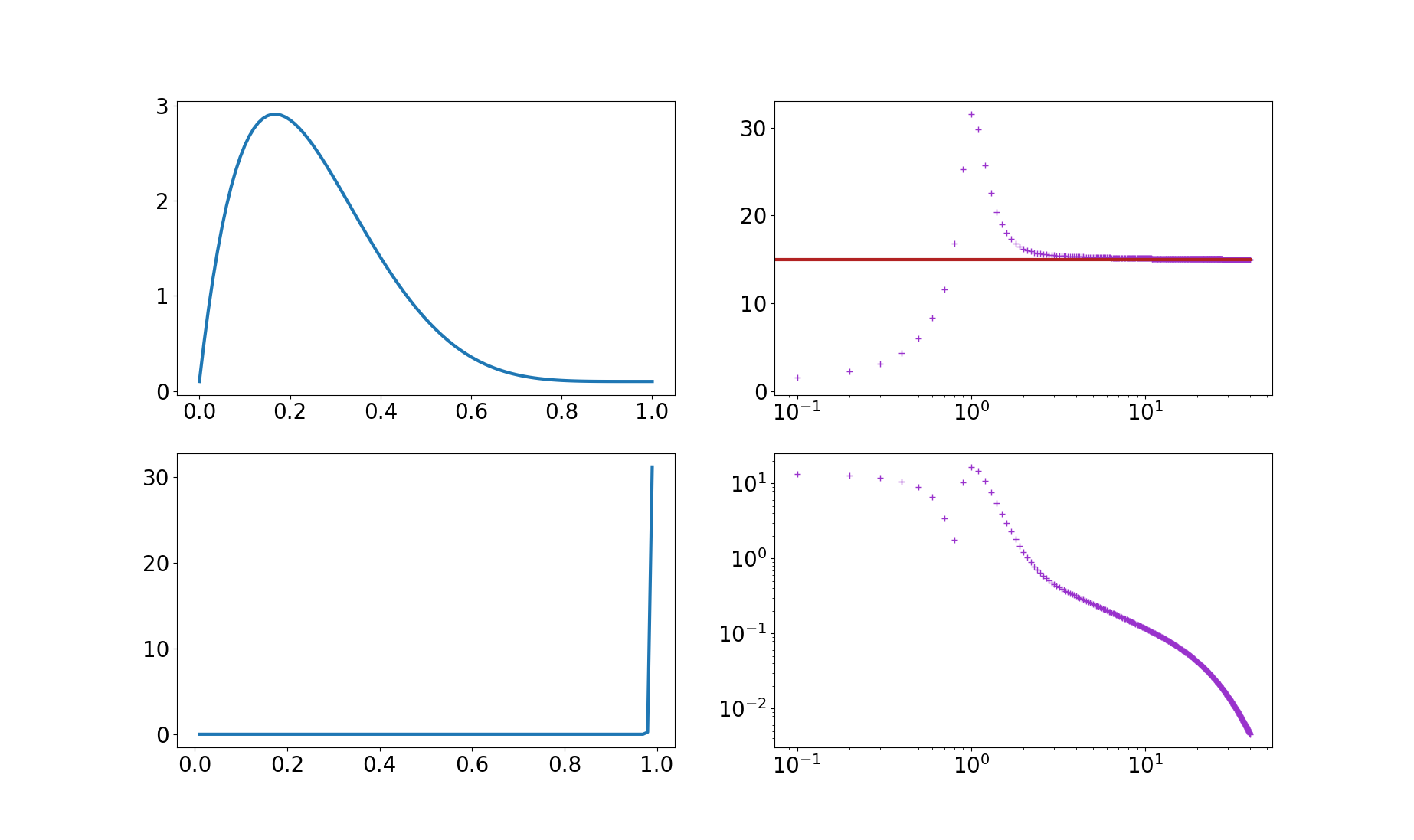}
\caption{Top left: the initial distribution is given by the sum of a Beta distribution with parameters $2$ and $6$ and a constant, namely $x\mapsto \beta_{(2,6)}(x) + 0.1$. Bottom left: the distribution at time $T=40$. Top right: purple: total population at the discretization times, red: the function $t\mapsto r/(M(1-\alpha)) = 15$. Bottom right: $\Leb^1$ difference between the total population at time $t$ and the $r/(M(1-\alpha)) = 15$.\label{fig:canni}}
\end{figure}
With notations closer to the ones used in~\cite{Desvillettes2008}, the selection operator would be
\[
\Sigma[\mu](x) = r - \int_X (y-\alpha x)\d \mu(y)
\]
which does not satisfy the assumptions required for existence in their paper, but does satisfy the one from the present paper. As stated earlier, the total variation norm is not well suited for cases in which concentration occurs. The purpose of the three next subsections is to provide example of such favorable cases for this norm.
\subsection{A stable distribution without singular part}

In this section, we consider a selection operator given by
\[
\Sigma[\mu](x) = a(x) - \int_X J(x-y)\d \mu(y)
\]
defined on $X = \R$, with
\[
a(x) = \frac{(1+|x|)\e^{-|x|}}{4} \qquad \text{and the kernel} \qquad J(x) = \frac{\e^{-|x|}}{2}.
\]
This operator $\Sigma$ satisfies the assumptions of the existence section. Indeed, one has
\[
\forall\, \mu\geq 0,\;\langle \mu,\Sigma[\mu]\rangle = \langle\mu,a\rangle - \int_X J(x-y)\d \mu(y)\d \mu(x) \leq \mu(X)
\]
and
\[
\forall\, \mu,\nu,\; \left\|\Sigma[\mu] - \Sigma[\nu]\right\|_\infty \leq \frac{1}{2}\|\mu-\nu\|_{TV}.
\]
It has to be noted that $a = J*J$ with $*$ being the convolution operator. In the $\Leb^1$ context, a steady state $u$ would satisfy \[\left(a(x) - J*u(x)\right)u(x)=0\] almost everywhere, and we are interested in positive solutions. Applying the Fourrier transform, such positive steady state would satisfy \[\widehat{J}(\xi)\left(\widehat{J}(\xi) - \widehat{u}(\xi)\right) = 0\] so a natural candidate for a steady state is the kernel $J$ itself.
\\
We perform numerical tests on a truncated version of the problem, namely
\[
X = [-h,h], \qquad J_h(x) = \frac{\e^{-|x|}}{2(1-\e^{-h})}\1_{(-h,h)} \qquad \text{and} \qquad a_h(x) = \frac{(1+|x|)\e^{-|x|} - \e^{-2h}\cosh(x)}{4(1-\e^{-h})^2}.
\]
For various initial conditions, the distribution seems to converge towards $J_h$, see Figure~\ref{fig:J} for an example. In this subsection and the next, the numerical method used is a standard semi-implicit Euler scheme.
\begin{figure}
\includegraphics[width=\textwidth]{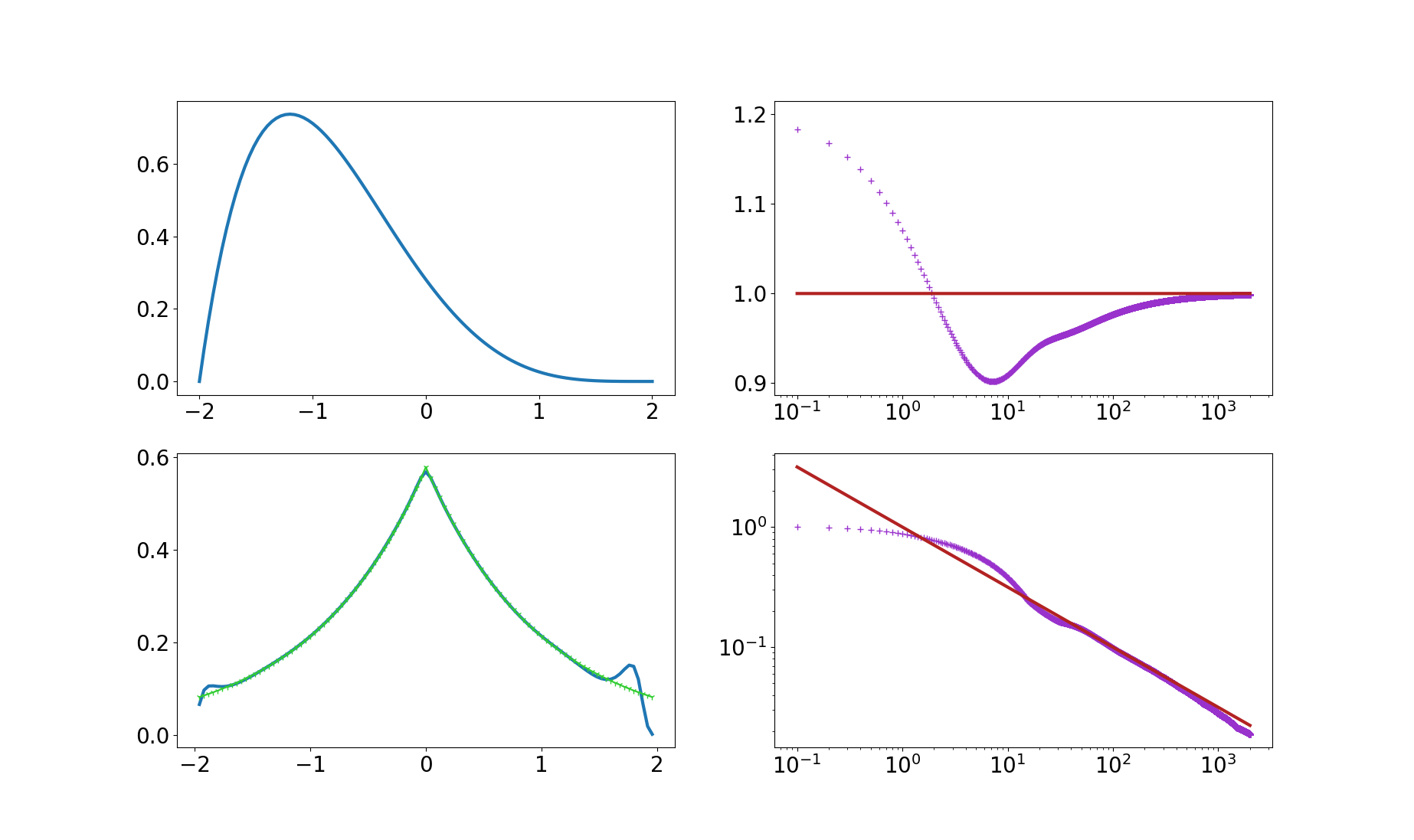}
\caption{Top left: the initial distribution is given by $x\mapsto \beta_{(2,5)}\left(\frac{x+h}{2h}\right)$, with $\beta_{(2,5)}$ a Beta distribution with parameters $2$ and $5$. Bottom left: blue: the distribution at time $T=2000$, green: the function $J_h$. Top right: purple: total population at the discretization times, red: the function $t\mapsto 1$. Bottom right: purple: $\Leb^1$ difference between the solution and the fonction $J_h$. Red: the function $t\mapsto t^{-\frac{1}{2}}$.\label{fig:J}}
\end{figure}
The function $J_h$ seems to be a stable steady state. In a measure setting, we can expect a convergence in total variation norm, but rather slowly.

\subsection{Trait-structured preys-predators}

In the previous subsection, we provided an \textit{ad hoc} example of operator such that the solutions converges towards an equilibrium without singular part which is stable with respect to the initial condition. In this one, we give an example which is more biologically grounded, that also seems to converge towards a measure that has a density with respect to the Lebesgue measure. To this end, we consider the trait space $X = [0,1]$ and a selection operator given by
\[
\Sigma[\mu](x) = a(x) + A\mu([x-\eta,x)\cap X) - B\mu([x,x+\eta)\cap X)
\]
with positive constants $A$ and $B$. This function intends to model a preys-predators type interaction with trait $x$ being interpreted as the position in the food chain. Each individual can be both prey for and predator depending on the value of its trait $x$. In addition, the function $a$ is taken decreasing, to model the ability for smaller species to proliferate faster. More precisely, in the simulation, we take $a(x) = 1 - 1.5\sqrt{x}$, $A = 0.8$, $B=0.7$ and $\eta = 0.51$. In this setting, the selection operator satisfies Assumption~\eqref{hyp:lips} with $k(r) = A+B$, but it is unclear if it also satisfies Assumption~\eqref{hyp:fitnessboundedfromabove}. We performed the simulations anyway and obtained Figure~\ref{fig:pp}. We note that with these parameters, the asymptotic measure seems to have no singular part. In addition, we note first that with the predation phenomenon, a population with nonnegative proliferation rate can survive, and second that dumped oscillation in the total population occurs, which is reminiscent of the classical preys-predators system.
\begin{figure}
\includegraphics[width=\textwidth]{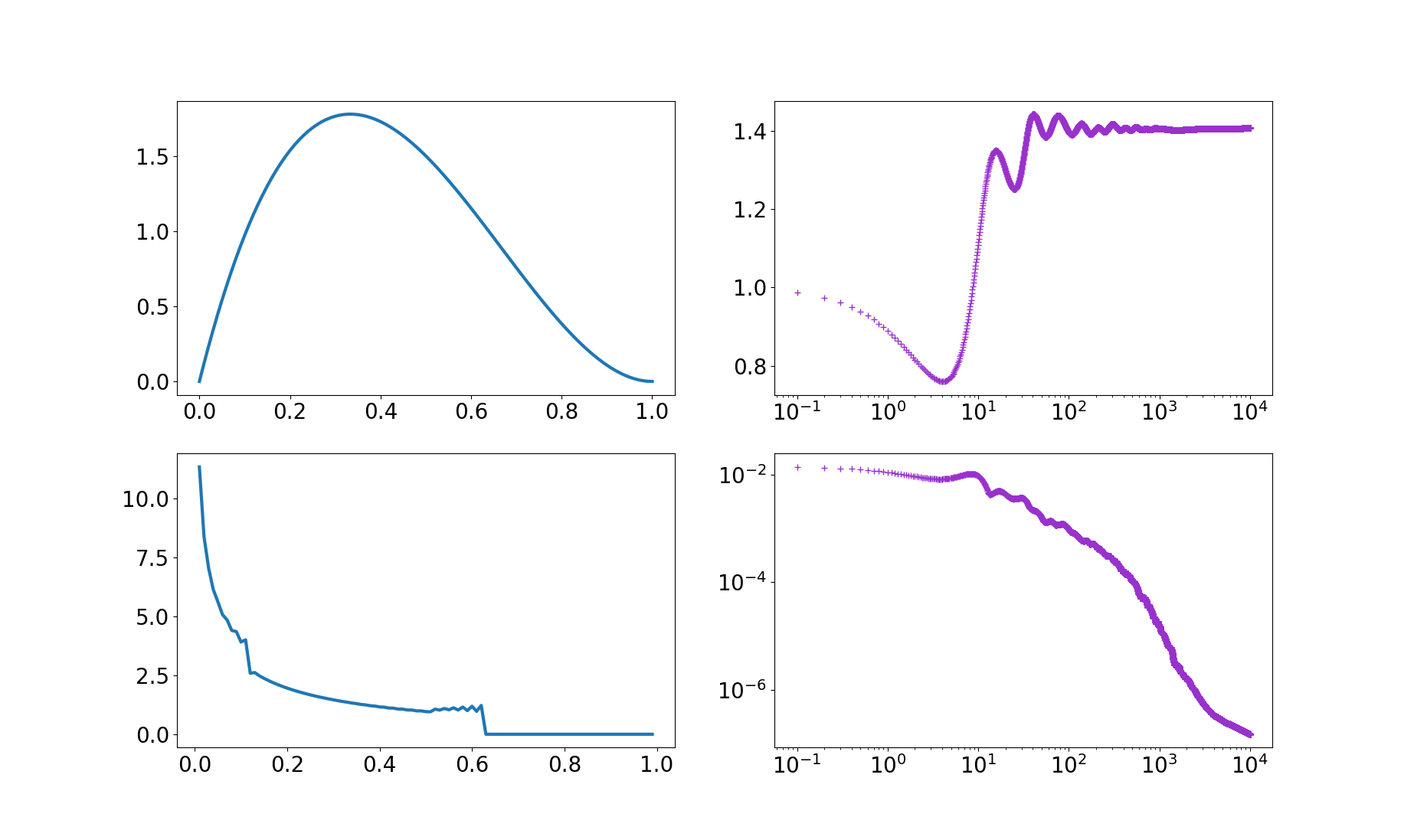}
\caption{Top left: the initial distribution is a Beta distribution with parameters $2$ and $3$ $x\mapsto \beta_{(2,3)}(x)$. Bottom left: the distribution at time $T=10000$. Top right: total population at the discretization times. Bottom right: $\Leb^1$ difference between two successive discretized solution.\label{fig:pp}}
\end{figure}
In the notation of~\cite{Desvillettes2008}, the selection operator would be
\[
\Sigma[\mu](x) = a(x) - \int_0^1 B\1_{[x,x+\eta)\cap [0,1]}(y) - A\1_{[x-\eta,x)\cap [0,1]}(y)
\]
which does not satisfies their assumptions for existence.
\subsection{Convergence in total variation norm with a uniform competition for ressources}

In this subsection, we consider a famous particular model for the selection equation, given by the selection operator
\begin{equation}\label{op:camille}
\Sigma[\mu](x) = r(x) - \mu(X)
\end{equation}
with $X$ an arbitrary compact set, say $[0,1]$. This equation has been studied in~\cite{PerthameTransport} when $r$ has a single maximum, and in~\cite{Lorenzi_2020} when different species can coexist asymptotically, see also~\cite{Pouchol_2018} for global stability. Here, we consider plateau growth rates $r$, in the sense that the maximum is not reached on a discrete set. More precisely, we require
\begin{equation}\label{hyp:plateau}
r:X\to (0,\infty), \qquad \max_{x\in X} r(x) =: r_M, \qquad \min_{x\in X} r(x) =: r_m > 0
\end{equation}
and
\begin{equation}\label{hyp:gap}
\inf_{y\not\in \mathcal{S}} r_M - r(y) =:\eta >0
\end{equation}
in which we denote
\[
\mathcal{S}:= \mathrm{argmax}(r).
\]
We require the initial measure $\mu$ to be finite and that a subset of $\mathcal{S}$ is included in its support 
\begin{equation}\label{hyp:nonempty}
\mathcal{S}_\mu := \supp \mu \cap \mathcal{S} \neq \emptyset.
\end{equation}
We also need the hypothesis
\begin{equation}\label{hyp:ac}
\1_\mathcal{S}\mu : f\mapsto \langle \mu,f\1_\mathcal{S}\rangle~\text{is absolutely continuous w.r.t. the Lebesgue measure}~\mathcal{L}
\end{equation}
and finally
\begin{equation}\label{hyp:lebesgue}
\mathcal{L}(\mathcal{S}_\mu) > 0.
\end{equation}
Under these assumptions, we do not observe the usual concentration on a discrete set, but the dynamics rather selects the traits in $\mathcal{S}_\mu$, as stated in the following proposition.
\begin{Proposition}\label{prop}
Under Assumptions~\eqref{hyp:plateau},~\eqref{hyp:gap},~\eqref{hyp:nonempty}
,~\eqref{hyp:ac} and~\eqref{hyp:lebesgue}, the measure solution $(\mu_t)_{t\geq0}$ associated with the operator~\eqref{op:camille} satisfies
\begin{align*}
&\left\|\mu_t - \e^{\int_0^\infty r_M-\mu_s(X)\d s}\1_{S}\mu\right\|_{TV} \leq C_1(\mu,r)\e^{-\frac{\eta}{2}t}\|\mu - \1_{S}\mu\|_{TV}\\
&\qquad\qquad+\leq C_2(\mu,r)\e^{-\eta t} + C_3(\mu,r)\e^{-r_M t}\|r - \mu_0(X)\|_\infty
\end{align*}
with $C_1(\mu,r)$, $C_2(\mu,r)$ and $C_3(\mu,r)$ constants depending on the initial condition and the growth rate $r$.
\end{Proposition}
The result is achieved by combining the two following lemmas. The first one states that the dynamics selects the traits that lies in $\mathcal{S}_\mu$, and the second deals with the asymptotic behaviour of the total population.
\begin{lemma}
Under the same assumptions as in Proposition~\ref{prop}, one has
\[
\left\|\mu_t - \e^{\int_0^t r_M-\mu_s(X)\d s}\1_{S}\mu\right\|_{TV} \leq C_1(\mu,r)\e^{-\frac{\eta}{2}t}\|\mu - \1_{S}\mu\|_{TV}.
\]
\end{lemma}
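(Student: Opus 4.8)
The plan is to exploit the explicit representation of the solution together with the fact that the growth rate is constant, equal to $r_M$, on the argmax set $\mathcal{S}$. By Lemma~\ref{lem:fixdpnt} and Theorem~\ref{thm:wlpsdnss} the solution satisfies the fixed-point identity~\eqref{eq:fixpt}, that is $\langle\mu_t,f\rangle = \int_X f\,\e^{\int_0^t\Sigma[\mu_s]\,\d s}\,\d\mu$ for every bounded measurable $f$. Writing $\rho_s := \mu_s(X)$ and inserting $\Sigma[\mu_s](x) = r(x)-\rho_s$, the exponent separates into an $x$-dependent and a purely time-dependent part, $\int_0^t\Sigma[\mu_s](x)\,\d s = r(x)\,t - \int_0^t\rho_s\,\d s$. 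Setting $G_t := \e^{-\int_0^t\rho_s\,\d s}$ and $H_t := \e^{\int_0^t(r_M-\rho_s)\,\d s} = \e^{r_M t}G_t$, this reads $\langle\mu_t,f\rangle = G_t\int_X f(x)\,\e^{r(x)t}\,\d\mu(x)$.

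Next I would perform the cancellation on $\mathcal{S}$ and use the gap hypothesis. Since $r\equiv r_M$ on $\mathcal{S}$, the weight $G_t\,\e^{r(x)t}$ equals $H_t$ there, so the part of this integral carried by $\mathcal{S}$ is exactly $\langle H_t\1_{\mathcal{S}}\mu,f\rangle$. Subtracting, the contributions on $\mathcal{S}$ cancel and one is left with $\mu_t - H_t\1_{\mathcal{S}}\mu = G_t\,\e^{r(\cdot)t}\,\1_{X\setminus\mathcal{S}}\mu$, which is a nonnegative measure. Its total variation norm is therefore the mass of this measure, and the gap hypothesis~\eqref{hyp:gap}, i.e. $r\leq r_M-\eta$ off $\mathcal{S}$, yields
\[
\|\mu_t - H_t\1_{\mathcal{S}}\mu\|_{TV} = G_t\int_{X\setminus\mathcal{S}}\e^{r(x)t}\,\d\mu(x) \leq H_t\,\e^{-\eta t}\,\mu(X\setminus\mathcal{S}) = H_t\,\e^{-\eta t}\,\|\mu-\1_{\mathcal{S}}\mu\|_{TV},
\]
the last equality holding because $\mu-\1_{\mathcal{S}}\mu = \1_{X\setminus\mathcal{S}}\mu$ is nonnegative.

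It then remains to bound the prefactor $H_t$ by a constant depending only on $\mu$ and $r$. Taking $f=\1_X$ in the mild formulation~\eqref{eq:measure} gives $\ddt\rho_t = \langle\mu_t,r\rangle - \rho_t^2 \leq r_M\rho_t - \rho_t^2$, a logistic upper bound forcing $\rho_t \leq R := \max(\rho_0,r_M)$ for all $t\geq0$; conversely the representation gives $\rho_t \geq H_t\,\mu(\mathcal{S})$, so $H_t \leq R/\mu(\mathcal{S}) =: C_1(\mu,r)$ as soon as $\mu(\mathcal{S})>0$. Injecting this constant proves the lemma, in fact with the sharper rate $\e^{-\eta t}$, hence a fortiori with the stated $\e^{-\eta t/2}$. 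The one point requiring care — and the place where Assumptions~\eqref{hyp:nonempty},~\eqref{hyp:ac} and~\eqref{hyp:lebesgue} enter — is precisely the strict positivity of $\mu(\mathcal{S}) = \mu(\mathcal{S}_\mu)$, without which the prefactor cannot be controlled; the algebraic cancellation on $\mathcal{S}$ and the gap estimate are otherwise immediate.
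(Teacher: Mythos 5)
Your argument is correct, and it takes a genuinely different route from the paper's. The paper works on the dual side: it writes the dual solution $f(t,x)=\e^{\int_0^t(r(x)-\mu_s(X))\d s}f_0(x)$, compares it with $g(t,x)=\e^{(r(x)-r_M)t}f_0(x)$, and controls the prefactor $\e^{\int_0^t(r_M-\mu_s(X)-\eta)\d s}$ by splitting time at a $t_0$ beyond which $|r_M-\mu_t(X)|\leq\eta/2$ --- a fact imported from~\cite{Lorenzi_2020} --- which is exactly where the rate degrades from $\eta$ to $\eta/2$; it then passes from the pointwise dual inequality to a total variation bound. You instead stay on the primal side with the representation $\mu_t=G_t\,\e^{r(\cdot)t}\mu$ coming from the fixed-point identity~\eqref{eq:fixpt} (legitimate here, since $\Sigma[\mu]=r-\mu(X)\leq r_M$ satisfies~\eqref{hyp:boundedabove}), note that the difference $\mu_t-H_t\1_{\mathcal S}\mu=G_t\,\e^{r(\cdot)t}\1_{X\setminus\mathcal S}\mu$ is a nonnegative measure whose total variation norm is simply its mass, and bound the prefactor by the elementary squeeze $H_t\,\mu(\mathcal S)\leq\mu_t(X)\leq\max(\mu(X),r_M)$, the upper bound following from the logistic differential inequality. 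This buys you the sharper rate $\e^{-\eta t}$, a fully explicit constant $C_1=\max(\mu(X),r_M)/\mu(\mathcal S)$, and independence from the external convergence result $\mu_t(X)\to r_M$; it also makes the passage to the total variation norm transparent, where the paper's ``take the dual inequality'' step is left somewhat implicit. The one point you rightly flag is the positivity $\mu(\mathcal S)>0$: it does not follow formally from~\eqref{hyp:nonempty} alone, and~\eqref{hyp:ac}--\eqref{hyp:lebesgue} yield it only if the density of $\1_{\mathcal S}\mu$ is positive on $\mathcal S_\mu$; but the paper's second lemma already divides by $\mu_0(\mathcal S)$, so this positivity is implicitly assumed throughout and your reliance on it imposes no additional restriction.
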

\begin{proof}
We first consider the dual problem, namely a classical solution $f$ of
\[
\p_t f(t,x) = (r(x) - \mu_t(X))f(t,x)
\]
with initial condition $f_0$. A solution can be writen
\[
f(t,x) = \e^{\int_0^t \left(r(x) - \mu_s(X)\right)\d s}f_0(x).
\]
Let us consider $g$ be the classical solution of the related problem
\[
\p_t g(t,x) = (r(x) - r_M)g(t,x)
\]
with initial condition $f_0$. We can express $g$ as
\[
g(t,x) = \e^{(r(x) - r_M)t}f_0(x)
\]
and its relation to $f$ by
\[
f(t,x) = \e^{\int_0^t \left(r_M - \mu_s(X)\right)\d s}g(t,x).
\]
Thanks to Assumptions~\eqref{hyp:plateau} and~\eqref{hyp:gap}, one has for all $t\geq0$ and $x\in X$
\[
\left|g(t,x) - \1_{\mathcal{S}}(x)f_0(x)\right| \leq \e^{-\eta t}|f_0(x) - \1_{\mathcal{S}}(x)f_0(x)|
\]
so we obtain
\[
\left|f(t,x) - \e^{\int_0^t \left(r_M - \mu_s(X)\right)\d s}\1_{\mathcal{S}}(x)f_0(x)\right| \leq \e^{\int_0^t \left(r_M - \mu_s(X)-\eta\right)\d s}|f_0(x) - \1_{\mathcal{S}}(x)f_0(x)|.
\]
It is proved in~\cite{Lorenzi_2020} that $\mu_t(X) \to r_M$, so there exists $t_0\geq 0$ such that for all $t\geq t_0$, one has
\[
|r_M - \mu_t(X)|\leq \frac{\eta}{2},
\]
from which we deduce
\[
\left|f(t,x) - \e^{\int_0^t \left(r_M - \mu_s(X)\right)\d s}\1_{\mathcal{S}}(x)f_0(x)\right| \leq \e^{\left(r_M - \frac{\eta}{2}\right)t_0}\e^{-\frac{\eta}{2}t}|f_0(x) - \1_{\mathcal{S}}(x)f_0(x)|.
\]
Using Assumptions~\eqref{hyp:ac} and~\eqref{hyp:lebesgue}, we can take the dual inequality in total variation norm, which ends the proof.
\end{proof}
Now we provide a control of the difference between
\[
t\mapsto e^{\int_0^t r_M-\mu_s(X)\d s}
\]
and its final value.
\begin{lemma}
Under Assumptions~\eqref{hyp:plateau} and~\eqref{hyp:gap}, the function $t\mapsto r_M - \mu_t(X)$ lies in $\Leb^1(0,\infty)$ and for all $t\geq 0$, one has
\[
\left|e^{\int_0^t r_M-\mu_s(X)\d s} - e^{\int_0^\infty r_M-\mu_s(X)\d s}\right| \leq \frac{\eta}{r_m\min \left(1,\frac{\mu_0(\mathcal{S})}{r_M}\right)}\e^{-\eta t} + \frac{R_M}{r_m \min \left(1,\frac{\mu_0(\mathcal{S})}{r_M}\right)}\e^{-r_M t}\|r - \mu_0(X)\|_\infty.
\]
\end{lemma}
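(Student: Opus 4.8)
The plan is to exploit the separable form of the operator \eqref{op:camille}: since $\Sigma[\mu_t](x)=r(x)-\mu_t(X)$ couples the spatial variable to the measure only through the scalar $\mu_t(X)$, the representation \eqref{eq:fixpt} gives, for every bounded $f$,
\[
\langle\mu_t,f\rangle=\e^{-\int_0^t\mu_s(X)\,\d s}\int_X f(x)\,\e^{t r(x)}\,\d\mu_0(x).
\]
Writing $N(t):=\mu_t(X)$ and $\Phi(t):=\e^{\int_0^t N(s)\,\d s}$, the choice $f=\1_X$ gives $N(t)\Phi(t)=\int_X \e^{t r(x)}\,\d\mu_0(x)$; since $\Phi'=N\Phi$, this is exactly $\Phi'(t)=\int_X\e^{t r(x)}\,\d\mu_0(x)$, which integrates into the closed form
\[
\Phi(t)=1+\int_X\frac{\e^{t r(x)}-1}{r(x)}\,\d\mu_0(x).
\]
The whole lemma then reduces to the analysis of the explicit scalar function $\e^{Q(t)}=\e^{r_M t}/\Phi(t)$, where $Q(t):=\int_0^t\bigl(r_M-N(s)\bigr)\,\d s$.

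Next I would isolate the dominant contribution. Splitting the integral over $\mathcal{S}=\mathrm{argmax}(r)$ and its complement and using $r\equiv r_M$ on $\mathcal{S}$ gives $\Phi(t)=A\,\e^{r_M t}+B(t)$ with $A:=\mu_0(\mathcal{S})/r_M$, where $B(t)$ collects the constant part together with the off-$\mathcal{S}$ integral. The gap hypothesis \eqref{hyp:gap}, namely $r(x)\le r_M-\eta$ for $x\notin\mathcal{S}$, controls the latter by $\tfrac{\mu_0(X\setminus\mathcal{S})}{r_m}\e^{(r_M-\eta)t}$, so that $B(t)\e^{-r_M t}$ splits into a part of order $\e^{-\eta t}$ and a constant part of order $\e^{-r_M t}$. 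Dropping the nonnegative off-$\mathcal{S}$ contribution in $\Phi$ gives the lower bound $\Phi(t)\ge 1+A(\e^{r_M t}-1)$, whence, setting $u=\e^{r_M t}\ge1$, the elementary monotonicity of $u\mapsto u/(Au+1-A)$ yields the uniform bound
\[
\e^{Q(t)}=\frac{\e^{r_M t}}{\Phi(t)}\le\frac{1}{\min\!\bigl(1,\mu_0(\mathcal{S})/r_M\bigr)}.
\]
The same decomposition shows $r_M-N\in\Leb^1(0,\infty)$ and identifies the limit $\e^{Q_\infty}=r_M/\mu_0(\mathcal{S})=1/A$.

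Finally, I would turn these estimates into the stated inequality. From $\e^{Q(t)}=1/\bigl(A+B(t)\e^{-r_M t}\bigr)$ and $\e^{Q_\infty}=1/A$ one obtains the exact identity
\[
\e^{Q(t)}-\e^{Q_\infty}=-\frac{B(t)}{A}\,\e^{Q(t)}\,\e^{-r_M t},
\]
and inserting the two pieces of $B(t)$ together with the uniform bound on $\e^{Q(t)}$ produces a sum of one term decaying like $\e^{-\eta t}$ (the off-$\mathcal{S}$ contribution) and one decaying like $\e^{-r_M t}$ (the constant contribution), each carrying the factor $1/\min(1,\mu_0(\mathcal{S})/r_M)$, as claimed.

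The main obstacle is the bookkeeping of the constants rather than any conceptual difficulty: one must check that the constant part of $B(t)$ is bounded, up to the stated prefactor, by $\|r-\mu_0(X)\|_\infty=\|\Sigma[\mu_0]\|_\infty$ (heuristically it measures the initial mismatch $r_M-N(0)=r_M-\mu_0(X)$, which relaxes at the fast rate $r_M$), and that the off-$\mathcal{S}$ coefficient collapses to $\eta/r_m$ after the various cancellations. A second point requiring care is that every estimate implicitly uses $\mu_0(\mathcal{S})>0$: if $\mu_0(\mathcal{S})=0$ the denominator $\min(1,\mu_0(\mathcal{S})/r_M)$ vanishes and the statement is vacuous, in agreement with the fact that integrability of $r_M-N$ fails when no mass sits on the plateau. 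Apart from these verifications, the argument is a direct computation on the explicit function $\Phi$.
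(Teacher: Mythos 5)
Your argument is correct and rests on the same computational core as the paper's proof: the closed form $\e^{\int_0^t\mu_s(X)\,\d s}=1+\langle\mu_0,(\e^{r(\cdot)t}-1)/r\rangle$ (which the paper imports from~\cite{Lorenzi_2020} and you rederive from the fixed-point representation~\eqref{eq:fixpt}), the lower bound $1+\langle\mu_0,(\e^{r(\cdot)t}-1)/r\rangle\geq\min\left(1,\mu_0(\mathcal{S})/r_M\right)\e^{r_Mt}$, and the splitting of the integral over $\mathcal{S}$ versus its complement, which produces exactly the two rates $\e^{-\eta t}$ and $\e^{-r_Mt}$. Where you genuinely diverge is in the last step: the paper decomposes $r_M-\mu_t(X)$ into two fractions, bounds each termwise, and then declares the proof complete, leaving implicit the passage from a pointwise bound on $r_M-\mu_t(X)$ to the stated bound on the difference of exponentials; you instead write the exact identity $\e^{Q(t)}-\e^{Q_\infty}=-(B(t)/A)\,\e^{-r_Mt}\e^{Q(t)}$ together with the uniform bound $\e^{Q(t)}\leq 1/\min(1,A)$, which estimates directly the quantity the lemma actually asserts. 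This is a real advantage of your route. The price, as you anticipate, is in the constants: your version carries an extra factor $1/A=r_M/\mu_0(\mathcal{S})$ on the constant piece and factors $\mu_0(X\setminus\mathcal{S})$ and $(r_M-r_m)/r_m$ on the off-plateau piece, so the prefactors do not literally reduce to those displayed in the statement --- but note that the paper's own termwise bounds apply to $r_M-\mu_t(X)$ rather than to the exponential difference, so its constants are not derived for the stated quantity either. Your remark that everything degenerates when $\mu_0(\mathcal{S})=0$ (the inequality becomes vacuous and the $\Leb^1$ claim can fail) is accurate and applies equally to the original argument, which also silently uses $\mu_0(\mathcal{S})>0$ in the denominator estimate.
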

\begin{proof}
Drawing inspiration from~\cite{Lorenzi_2020}, one has
\[
\mu_t(X) = \frac{\left\langle \mu_0, \e^{r(\cdot)t}\right\rangle}{1 + \left\langle \mu_0, \frac{\e^{r(\cdot)t}-1}{r}\right\rangle}
\]
so
\[
r_M - \mu_t(X) = \frac{\left\langle \mu_0, \left(\frac{r_M}{r}-1\right)\e^{r(\cdot)t}\right\rangle}{1 + \left\langle \mu_0, \frac{\e^{r(\cdot)t}-1}{r}\right\rangle} + \frac{r_M - \left\langle \mu_0, \frac{r_M}{r}\right\rangle}{1 + \left\langle \mu_0, \frac{\e^{r(\cdot)t}-1}{r}\right\rangle}.
\]
To estimate these terms, we first notice that
\[
1 + \left\langle \mu_0, \frac{\e^{r(\cdot)t}-1}{r}\right\rangle \geq 1 + \frac{\e^{r_M t} - 1}{r_M}\mu_0(\mathcal{S}) \geq \min \left(1,\frac{\mu_0(\mathcal{S})}{r_M}\right)\e^{r_M t}.
\]
Then, one has
\begin{align*}
\left\langle \mu_0, \left(\frac{r_M}{r}-1\right)\e^{r(\cdot)t}\right\rangle &= \int_{\mathcal{S}^c \,\cap\, \supp \mu_0}\left(\frac{r_M}{r(y)}-1\right)\e^{r(y)t}\d \mu_0(y)\\
\leq \frac{\eta}{r_m}\e^{(r_M-\eta)t}
\end{align*}
so the first term, which is non negative, is controlled by
\[
\frac{\left\langle \mu_0, \left(\frac{r_M}{r}-1\right)\e^{r(\cdot)t}\right\rangle}{1 + \left\langle \mu_0, \frac{\e^{r(\cdot)t}-1}{r}\right\rangle} \leq \frac{\eta}{r_m\min \left(1,\frac{\mu_0(\mathcal{S})}{r_M}\right)}\e^{-\eta t}.
\]
For the second, we write
\[
\left|r_M - \left\langle \mu_0, \frac{r_M}{r}\right\rangle\right| = \left|\left\langle \mu_0,r_M\left(\frac{1}{\mu_0(X)} - \frac{1}{r}\right)\right\rangle \right|\leq \frac{R_M}{r_m}\|r - \mu_0(X)\|_\infty
\]
so finally
\[
\left|\frac{r_M - \left\langle \mu_0, \frac{r_M}{r}\right\rangle}{1 + \left\langle \mu_0, \frac{\e^{r(\cdot)t}-1}{r}\right\rangle}\right| \leq \frac{R_M}{r_m \min \left(1,\frac{\mu_0(\mathcal{S})}{r_M}\right)}\e^{-r_M t}\|r - \mu_0(X)\|_\infty
\]
and the proof is complete.
\end{proof}

\section{Discussion}

In this work, we have studied the classic pure selection equation in the framework of measures. It enables to obtain well posedness of a global solution for fairly general assumptions, as well as a sufficient hypothesis for the persistence of the population that is readily interpreted. Then, we explored various classes of selection operator, both theoretically and numerically, and obtained different kinds of behaviours.
\\
All our theoretical study took place in the context of the topology of the total variation. Although this norm might seem 'rigid' for models stemming from adaptative dynamics, in which convergence towards Dirac deltas can occur, the examples we studied highlighted that under some particular assumptions, convergence in total variation norm is possible, even exponentially fast. Such decay estimates are, up to our knowledge, new for the selection equation. One possible continuation of this work would be to obtain such decay estimates in bounded Lipschitz norm, more suited for cases in which concentration happens. In particular, the case of subsection~\ref{sec:canni} display numericall a very fast convergence towards a Dirac mass, so it would be no surprise if an exponential decay could be proved.
\\
The examples we presented displayed various behaviours. In their paper mentionned earlier~\cite{May1975}, the authors highlighted sustained oscillation in a simple ODE system. One might wonder if it is possible to exhibit a continous version of their model, in which their would be a continuum of traits instead of three separate ones. Another way to obtain oscillations would be to include a periodic term in the selection operator, in the fashion of~\cite{Carrere_2020}.

\section*{Ackowledgement}

The author is grateful for the comments of the two anonymous reviewers, which greatly helped to improve the first manuscript. The author also thanks François \textsc{Castella} for a critical reading of the second manuscript. This work has been partially supported by the Chair “Modélisation Mathématique et Biodiversité” of Veolia Environnement-Ecole Polytechnique-Museum National d’Histoire Naturelle-Fondation X.

\section*{Supplementary mterials}
The codes used to generate the figures are available at \url{https://github.com/Hugo-Martin/selection_equation.git}.

\bibliographystyle{plain}


\bibliography{biblio}

\end{document}